\pgfplotsset{compat=1.15}
\newtheorem{lemma}{Lemma}
\newtheorem{prop}{Proposition}
\newtheorem{remark}{Remark}
\newtheorem{defn}{Definition}
\newcommand{\ZZ}{\mathbb{Z}}
\newcommand{\RR}{\mathbb{R}}
\newcommand{\CC}{\mathbb{C}}
\newcommand{\scrL}{\mathscr{L}}
\tikzset{-|-/.style={decoration={
  markings,
  mark=at position .5 with {\arrow{|}}},postaction={decorate}}}
\tikzset{-||-/.style={decoration={
  markings,
  mark=at position .47 with {\arrow{|}}, 
  mark=at position .53 with {\arrow{|}}},postaction={decorate}}}
\tikzset{-|||-/.style={decoration={
  markings,
  mark=at position .44 with {\arrow{|}}, 
  mark=at position .5 with {\arrow{|}},
  mark=at position .56 with {\arrow{|}}},postaction={decorate}}}
\tikzset{-||||-/.style={decoration={
  markings,
  mark=at position .41 with {\arrow{|}},
  mark=at position .47 with {\arrow{|}}, 
  mark=at position .53 with {\arrow{|}},
  mark=at position .59 with {\arrow{|}}},postaction={decorate}}}
\tikzset{ma/.style={
        decoration={markings,
            mark= at position 0.5 with {\arrow{#1}} ,
        },
        postaction={decorate}
    }
}
\begin{document}

\title{Slope Gap Distributions of Veech surfaces
}


\author{Luis Kumanduri        \and
        Anthony Sanchez                        \and
        Jane Wang
}




\maketitle

\begin{abstract} 
The slope gap distribution of a translation surface is a measure of how random the directions of the saddle connections on the surface are. It is known that Veech surfaces, a highly symmetric type of translation surface, have gap distributions that are piecewise real analytic.  Beyond that, however, very little is currently known about the general behavior of the slope gap distribution, including the number of points of non-analyticity or the tail. 

We show that the limiting gap distribution of slopes of saddle connections on a Veech translation surface is always piecewise real-analytic with \emph{finitely} many points of non-analyticity. We do so by taking an explicit parameterization of a Poincar\'{e} section to the horocycle flow on $\text{SL}(2,\mathbb{R})/\text{SL}(X,\omega)$ associated to an arbitrary Veech surface $\text{SL}(X,\omega)$ and establishing a key finiteness result for the first return map under this flow.  We use the finiteness result to show that the tail of the slope gap distribution of Veech surfaces always has quadratic decay.
\end{abstract}

\section{Introduction} 
\label{sec:intro}

In this paper, we will study the slope gap distributions of Veech surfaces, a highly symmetric type of translation surface. \textbf{Translation surfaces} can be defined geometrically as finite collections of polygons with sides identified in parallel opposite pairs. If we place these polygons in the complex plane $\CC$, the surface inherits a Riemann surface structure from $\CC$, and the one form $dz$ gives rise to a well-defined holomorphic one-form on the surface. This leads to a second equivalent definition of a translation surfaces as a pair $(X, \omega)$ where $X$ is a Riemann surface and $\omega$ is a holomorphic one-form on the surface. Every translation surface locally has the structure of $(\CC,dz)$, except for at finitely many points that have total angle around them $2\pi k$ for some integer $k \geq 2$. These points are called \textbf{cone points} and correspond to the zeros of the one-form $\omega$. A zero of order $n$ gives rise to a cone point of angle $2\pi (k+1)$. 

A translation surface inherits a flat metric from $\CC$. \textbf{Saddle connections} are then straight line geodesics connecting two cone points with no cone points in the interior. The \textbf{holonomy vector} of a saddle connection $\gamma$ is then the vector describing how far and in what direction the saddle connection travels, 
$$\mathbf{v}_\gamma = \int_\gamma \omega .$$
We will be interested in the distribution of directions of these vectors for various translation surfaces. 

There is a natural $\text{SL}(2,\mathbb{R})$ action on translation surfaces coming from the linear action of matrices on $\mathbb{R}^2$, as can be seen in Figure \ref{fig:sl2r_action}.

\begin{figure}[ht]
\centering
\begin{tikzpicture}
\draw[-|-] (0,0) -- (1,0); 
\draw[-||-] (1,0) -- (2,0); 
\draw[ma={>>}] (2,0) -- (2,1); 
\draw[-||-] (2,1) -- (1,1); 
\draw[ma={>}] (1,1) -- (1,2); 
\draw[-|-] (1,2) -- (0,2); 
\draw[ma={>}] (0,0) -- (0,1); 
\draw[ma = {>>}] (0,1) -- (0,2); 
\draw[gray] (1,0) -- (1,1) -- (0,1);
\draw[fill] (0,0) circle [radius = 0.05];
\draw[fill] (1,0) circle [radius = 0.05];
\draw[fill] (2,0) circle [radius = 0.05];
\draw[fill] (2,1) circle [radius = 0.05];
\draw[fill] (1,1) circle [radius = 0.05];
\draw[fill] (1,2) circle [radius = 0.05];
\draw[fill] (0,2) circle [radius = 0.05];
\draw[fill] (0,1) circle [radius = 0.05];

\node at (3.5,1.6) {$\begin{bmatrix} 1 & 1 \\ 0 & 1 \end{bmatrix}$};
\draw[->] (2.5, 1) -- (4.5,1);

\draw[-|-] (4,0) -- (5,0); 
\draw[-||-] (5,0) -- (6,0); 
\draw[ma={>>}] (6,0) -- (7,1); 
\draw[-||-] (7,1) -- (6,1); 
\draw[ma={>}] (6,1) -- (7,2); 
\draw[-|-] (7,2) -- (6,2); 
\draw[ma={>}] (4,0) -- (5,1); 
\draw[ma = {>>}] (5,1) -- (6,2); 
\draw[gray] (5,0) -- (6,1) -- (5,1);
\draw[fill] (4,0) circle [radius = 0.05];
\draw[fill] (5,0) circle [radius = 0.05];
\draw[fill] (6,0) circle [radius = 0.05];
\draw[fill] (7,1) circle [radius = 0.05];
\draw[fill] (6,1) circle [radius = 0.05];
\draw[fill] (7,2) circle [radius = 0.05];
\draw[fill] (6,2) circle [radius = 0.05];
\draw[fill] (5,1) circle [radius = 0.05];
\end{tikzpicture} 
\caption{A matrix in $\text{SL}(2,\RR)$ acting on a translation surface.}
\label{fig:sl2r_action}
\end{figure}

Sometimes this action produces a symmetry of the surface $(X, \omega)$. That is, after acting on the surface by the matrix, it is possible to cut and past the new surface so that it looks like the original surface again. The collection of these symmetries is the stabilizer under the $\text{SL}(2,\mathbb{R})$ action and is called the \textbf{Veech group} of the surface. It will be denoted $\text{SL}(X, \omega)$ and is a subgroup of $\text{SL}(2,\mathbb{R})$. When the Veech group $\text{SL}(X, \omega)$ of a translation surface has finite covolume in $\text{SL}(2,\mathbb{R})$, the surface $(X, \omega)$ is called a \textbf{Veech surface}. Sometimes such surfaces are also called lattice surfaces since $\text{SL}(X, \omega)$ is a lattice in $\text{SL}(X,\mathbb{R})$. Veech surfaces have many nice properties, such as satisfying the \emph{Veech dichotomy}: in any direction, every infinite trajectory on the surface is periodic or every infinite trajectory is equidistributed. For more information about translation and Veech surfaces, we refer the reader to \cite{HS} and \cite{Z}.

From work of Vorobets (\cite{V}), it is known that for almost every translation surface $(X,\omega)$ with respect to the Masur-Veech volume on any strata of translation surfaces (for details about Masur-Veech volume and strata, please see \cite{Z}), the angles of the saddle connections equidistribute in $S^1$. That is, if we let $$\Lambda(X, \omega) := \{\text{holonomy vectors of saddle connections of } (X, \omega)\},$$ and we normalize the circle to have total length $1$, then for any interval $I \subset S^1$, as we let $R \rightarrow \infty$, the proportion of vectors in $\Lambda(X,\omega)$ of length $\leq R$ that have direction in the interval $I$ converges to the length of $I$. 

A finer measure of the randomness of the saddle connection directions of a surface is its \textbf{gap distribution}, which we will now define. The idea of the gap distribution is it records the limiting distribution of the spacings between the set of angles (or in our case, slopes) of the saddle connection directions of length up to a certain length $R$. We will be working with slope gap distributions rather than angle gap distributions because the slope gap distribution has deep ties to the horocycle flow on strata of translation surfaces. Thus, dynamical tools relating to the horocycle flow can be more easily applied to analyze the slope gap distribution. 

Let us restrict our attention to the first quadrant and to slopes of at most $1$ and define 
$$\mathbb{S}(X, \omega) := \{\text{slope} (\mathbf{v}) : \mathbf{v} \in \Lambda(X, \omega) \text{ and } 0 < \text{Re}(\mathbf{v}), 0 \leq \text{Im}(\mathbf{v}) \leq \text{Re}(\mathbf{v})\}.$$

We also allow ourselves to restrict to slopes of saddle connections of at most some length $R$ in the $\ell_\infty$ metric, and define $$\mathbb{S}_R(X, \omega) := \{\text{slope} (\mathbf{v}) : \mathbf{v} \in \Lambda(X, \omega) \text{ and } 0 < \text{Re}(\mathbf{v}), 0 \leq \text{Im}(\mathbf{v}) \leq \text{Re}(\mathbf{v})\leq R\}.$$

We let $N(R)$ denote the number of unique slopes $N(R) := |\mathbb{S}_R(X, \omega)|$. By results of Masur (\cite{M}, \cite{M2}), the growth of the number of saddle connections of length at most $R$ in any translation surface is quadratic in $R$. We can order the slopes: 
$$\mathbb{S}_R (X, \omega) = \{0 \leq s_0^R < s_1^R < \cdots < s_{N(R)-1}^R\}.$$ 
Since $N(R)$ grows quadratically in $R$, we now define the \textbf{renormalized slope gaps} of $(X, \omega)$ to be $$\mathbb{G}_R(X, \omega) := \{R^2 (s_i^R - s_{i-1}^R) : 1 \leq i \leq N(R) - 1 \text{ and } s_i \in \mathbb{S}_R(X,\omega)\}.$$ 

If there exists a limiting probability distribution function $f : [0,\infty) \rightarrow [0,\infty)$ for the renormalized slope gaps $$\lim_{R \rightarrow \infty} \frac{|\mathbb{G}_R(X, \omega) \cap (a,b)|}{N(R)} = \int_a^b f(x) \, dx,$$ then $f$ is called the \textbf{slope gap distribution} of the translation surface $(X, \omega)$. If the sequence of slopes of holonomy vectors of increasing length of a translation surface were independent and identically distributed uniform $[0,1]$ random variables, then a probability computation shows that the gap distribution would be a Poisson process of intensity $1$. In all computed examples of slope gap distributions, however, this is not the case.

We now give a brief overview of the literature gap distributions of translation surfaces. In \cite{AC1}, Athreya and Chaika analyzed the gap distributions for typical surfaces and showed that for almost every translation surface (with respect to the Masur-Veech volume), the gap distribution exists. They also showed that a translation surface is a Veech surface if and only if it has \emph{no small gaps}, that is, if $\liminf_{R \rightarrow \infty} (\min(\mathbb{G}_R(X,\omega)) > 0$. In a later work \cite{ACL}, Athreya, Chaika, and Leli\'{e}vre explicitly computed the gap distribution of the golden L and in \cite{A}, Athreya gives an overview of results and techniques about gap distributions. Another relevant work is a paper by Taha (\cite{Taha}) studying cross sections to the horocycle and geodesic flows on quotients of $\text{SL}(2, \mathbb{R})$ by Hecke triangle groups. The computation of slope gap distributions involved understanding the first return map of the horocycle flow to a particular transversal of a quotient of $\text{SL}(2, \mathbb{R})$.  

In \cite{UW}, Uyanik and Work computed the gap distribution of the octagon, and also showed that the gap distribution of any Veech surface exists and is piecewise real analytic. The second named author then went on to study the gap distributions of doubled slit tori (\cite{S}). Up until then, all known slope gap distributions were for Veech surfaces. The above articles focus on gap distributions of the $\text{SL}(2,\mathbb R)$-orbit of \emph{specific} translation surfaces. This work applies to any Veech surface and gives insight to the general behavior of the graph of the slope gap distribution of Veech surfaces. In fact, outside of [AC12] where they show there are no small gaps, there is no other works in this direction with this level of generality. 

Uyanik and Work gave an algorithm to compute the gap distribution of any Veech surface and showed that the gap distribution was piecewise analytic. However, their algorithm does not necessarily terminate in finite time and can make it seem like the gap distribution can have infinitely many points of non-analyticity, as we will see in Section \ref{subsec:examples}. We improve upon their algorithm to guarantee termination in finite time and show as a result that every Veech translation surface has a gap distribution with finitely many points of non-analyticity. Uyanik and Work's algorithm starts by taking a tranversal to the horocycle flow which a priori may break up into infinitely many components under the return map. Our key observation is that by carefully choosing this transversal using the geometry of our surface, we see that it will only break up into finitely many pieces, which will give the following theorem.

\begin{restatable*}{thm}{finite} 
\label{thm:finite}

The slope gap distribution of any Veech surface has finitely many points of non-analyticity.
\end{restatable*}

In addition, we show that the tail of the gap distribution of any Veech surface has a quadratic decay. Let $f(t)\sim g(t)$ mean that the ratio is bounded above and below by two positive constants. 

\begin{restatable*}{thm}{tail} 
\label{thm:tail}
The slope gap distribution of any Veech surface has quadratic tail decay. That is, if $f$ denotes the density function of the slope gap distribution, then 
$$ \int_t ^\infty f(x)\,dx\sim{t^{-2}}.$$
\end{restatable*} 

Thus, the results of this paper and the ``no small gaps" result of \cite{AC1} give a us a good understanding of the graph of the slope gap distribution of Veech surfaces: for some time the graph is identically zero before becoming positive. Afterward the graph has finitely many pieces where it is real analytic and fluctuates up and down before it begins permanently decaying quadratically.

\textbf{Organization of the paper.} In Section \ref{subsec:poincare} we will go over background information on slope gap distributions, including how to relate the gap distribution to return times to a Poincar\'{e} section of the horocycle flow. In Section \ref{subsec:algorithm}, we will outline the algorithm of Uyanik and Work, and observe some possible modifications. In Section \ref{subsec:examples}, we will see how a couple steps of Uyanik and Work's algorithm apply to a specific Veech surface. A priori, the first return map to the Poincar\'{e} section breaks the section into infinitely many pieces, but after making some modifications to the parameterization, we will see that there are in fact finitely many pieces. In Section \ref{sec:main} we will give a proof of Theorem \ref{thm:finite}. The strategy of the proof is to apply a compactness argument to show finiteness under our modified parameterization of the Poincar\'{e} section. We will show that on a compact set that includes the Poincar\'{e} section, every point has a neighborhood that can contribute at most finitely many points of non-analyticity to the gap distribution. This will give us that the slope gap distribution has finitely many points of non-analyticity overall. In Section \ref{sec:decay}, as an application of Theorem \ref{thm:finite}, we prove quadratic decay of the slope gap distribution of Veech surfaces. Finally, in Section \ref{sec:questions}, we discuss a few further questions regarding slope gap distributions of translation surfaces.

\textbf{Acknowledgements.} The first and third authors would like to thank Moon Duchin for organizing the Polygonal Billiards Research Cluster held at Tufts University in 2017, where this work began, as well as all of the participants of the cluster. We would also like to thank Jayadev Athreya, Aaron Calderon, Jon Chaika, Samuel Leli\'{e}vre, Caglar Uyanik, and Grace Work for helpful conversations about limiting gap distributions. This work was supported by the National Science Foundation under grant number DMSCAREER-1255442, by the National Science Foundation Graduate Research Fellowship under grant numbers 1745302 (LK) and 1122374 (JW), and the National Science Foundation Postdoctoral Fellowship under grant number DMS-2103136 (AS).


\section{Background}
\label{sec:background} 

\subsection{A Poincar\'{e} section for the horocycle flow}
\label{subsec:poincare}
In this section, we review a general strategy for computing the gap distribution of a translation surface by relating slope gap distributions to the horocycle flow. For more background and proofs of the statements given here, see \cite{AC2} or \cite{ACL}. 

Suppose that we wish to compute the slope gap distribution of a translation surface $(X, \omega)$. We let $\Lambda(X, \omega)$, sometimes shortened to just $\Lambda$, be the set of holonomy vectors of the surface. We may start by considering all of the holonomy vectors of $(X, \omega)$ in the first quadrant, with $\ell_\infty$ norm $\leq R$. If we act on $(X,\omega)$ by the matrix $$g_{-2 \log(R)} = \begin{bmatrix} 1/R & 0 \\ 0 & R \end{bmatrix},$$ the slopes of the holonomy vectors of $g_{-2\log(R)} (X,\omega)$ in $[0,1]\times [0,R^2]$ is the same as $R^2$ times the slopes of the holonomy vectors of $(X,\omega)$ in $[0,R] \times [0,R]$, as we can see in Figure \ref{fig:renormalization}.

\begin{figure}[ht]
\centering
\begin{tikzpicture} 

\draw (-1,0) -- (3,0);
\draw (0,-1) -- (0,5);
\draw[fill=blue!25] (0,0) -- (2,0) -- (2,2) -- (0,2) -- (0,0);
\draw (4,0) -- (8,0);
\draw (5,-1) -- (5,5); 
\draw[fill = blue!25] (5,0) -- (6,0) -- (6,4) -- (5,4) -- (5,0);
\draw[thick, ->] (3,2) -- (4,2);
\draw node at (3.5, 2.5) {$g_{-2 \log(R)}$};
\draw node at (2,-0.3) {$R$};
\draw node at (-0.3,2) {$R$};
\draw node at (6,-0.3) {$1$};
\draw node at (4.7, 4) {$R^2$};

\draw[dashed] (0,0) -- (1.8,1.8); 
\draw[dashed] (0,0) -- (1.6,0.6); 
\draw[dashed] (0,0) -- (1.4,0.2); 
\draw[dashed] (0,0) -- (0.8,0.6); 

\draw[fill] (1.8,1.8) circle [radius = 0.025]; 
\draw[fill] (1.6,0.6) circle [radius = 0.025]; 
\draw[fill] (1.4,0.2) circle [radius = 0.025]; 
\draw[fill] (0.8,0.6) circle [radius = 0.025]; 

\draw[dashed] (5,0) -- (5.9,3.6); 
\draw[dashed] (5,0) -- (5.8,1.2); 
\draw[dashed] (5,0) -- (5.7,0.4); 
\draw[dashed] (5,0) -- (5.4,1.2); 

\draw[fill] (5.9,3.6) circle [radius = 0.025]; 
\draw[fill] (5.8,1.2) circle [radius = 0.025]; 
\draw[fill] (5.7,0.4) circle [radius = 0.025]; 
\draw[fill] (5.4,1.2) circle [radius = 0.025]; 

\node at (1, -1.5) {$\Lambda(X,\omega)$};
\node at (6, -1.5) {$\Lambda(g_{-2 \log (R)}(X,\omega))$};
\end{tikzpicture}
\caption{Upon renormalizing a surface $(X,\omega)$ by applying $g_{-2 \log(R)}$, the slopes of the saddle connections of $(X,\omega)$ scale by $R^2$. }
\label{fig:renormalization}
\end{figure}
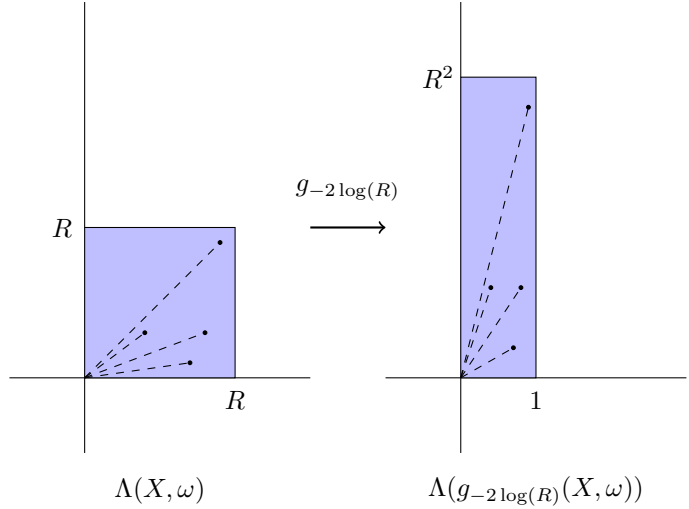

Another important observation is that the horocycle flow $$h_s = \begin{bmatrix} 1 & 0 \\ -s & 1 \end{bmatrix},$$ changes slopes of holonomy vectors by $s$. That is, $$\text{slope}(h_s (\mathbf{z})) = \text{slope}(\mathbf{z}) - s$$ for $\mathbf{z} \in \mathbb{R}^2$. As a result, slope differences are preserved by the flow $h_s$. 

Now we let the Veech group of the surface be $\text{SL}(X,\omega)$. We can then define a transversal or \textbf{Poincar\'{e} section} for the horocycle flow on $\text{SL}(2, \RR) / \text{SL}(X, \omega)$ to be the surfaces in the orbit of $(X,\omega)$ with a short horizontal saddle connection of length $\leq 1$. That is, 

$$\Omega(X,\omega) = \{g\text{SL}(X, \omega) : g \Lambda \cap \left((0,1] \times \{0\}\right) \neq \emptyset\}.$$

Then, the slope gaps of $(X,\omega)$ for holonomy vectors of $\ell_\infty$ length $\leq R$ are exactly $1/R^2$ times the set of $N(R)-1$ first return times to $\Omega(X,\omega)$ of the surface $g_{-2 \log(R)} (X,\omega)$ under the horocycle flow $h_s$ for $s \in [0,R^2]$. Here, we are thinking of return times as the amount of time between each two successive times that the horocycle flow returns to the Poincar\'{e} section. In this way, the slope gaps of $(X,\omega)$ are related to the return times of the horocycle flow to the Poincar\'{e} section. Summarizing, since $\mathbb{G}_R(X,\omega)$ is the set of slope gaps renormalized by $R^2$, we have that 
$$\mathbb{G}_R (X, \omega) = \{\text{first } N(R)-1 \text{ return times of } g_{-2\log(R)}(X,\omega) \text{ to } \Omega(X,\omega) \text{ under } h_s\}.$$

For a point $z$ in the Poincar\'{e} section $\Omega(X,\omega)$, we denote by $R_h(z)$ the return time of $z$ to $\Omega(X,\omega)$ under the horocycle flow. Then, as one lets $R \rightarrow \infty$, this renormalization procedure gives us that 
$$\lim_{R\rightarrow \infty} \frac{|\mathbb{G}_R(X,\omega) \cap (a,b)|}{N(R)} = \mu\{z \in \Omega(X,\omega) : R_h(z) \in (a,b)\},$$ where $\mu$ is the unique ergodic probability measure on $\Omega(X,\omega)$ for which the first return map under $h_s$ is not supported on a periodic orbit. Computing the slope gap distribution then reduces to finding a Poincar\'{e} section for the horocycle flow on $\text{SL}(2,\RR)/\text{SL}(X, \omega)$, a suitable measure on this Poincar\'{e} section, and the distribution function for the first return map on the Poincar\'{e} section. 

We note that this last point also makes it clear that every surface in the $\text{SL}(2, \mathbb{R})$ orbit of a Veech surface has the same slope gap distribution. We also note that scaling the surface by $c$ scales the gap distribution from $f(x)$ to $\frac{1}{c^4} f\left(\frac{x}{c^2} \right)$ (see \cite{UW} for a proof of this latter fact).

\subsection{Computing Gap Distributions for Veech surfaces} 
\label{subsec:algorithm}

In \cite{UW}, Uyanik and Work developed a general algorithm for computing the slope gap distribution for Veech surfaces. In particular, their algorithm finds a parameterization for the Poincar\'{e} section of any Veech surface and calculates the gap distribution by examining the first return time of the horocycle flow to this Poincar\'{e} section. In this section, we'll go over the basics of this algorithm. For more details about this algorithm as well as a proof of why it works, please see Uyanik and Work's original paper. 

We start by supposing that $(X, \omega)$ is a Veech surface with $n < \infty$ cusps. Then, we let $\Gamma_1, \ldots, \Gamma_n$ be representatives of the conjugacy classes of maximal parabolic subgroups of $\text{SL}(X,\omega)$. We are going to find a piece of the Poincar\'{e} section for each parabolic subgroup $\Gamma_i$. The idea here is that the set of shortest holonomy vectors of $(X,\omega)$ in each direction breaks up into $\bigcup_{i=1}^n \text{SL}(X, \omega) \mathbf{v}_i$ where the $\mathbf{v}_i$ vectors are in the eigendirections of the generators of each $\Gamma_i$.

The Poincar\'{e} section is given by those elements $g \in \text{SL}(X, \RR) / \text{SL}(X,\omega)$ such that $g(X,\omega)$ has a short (length $\leq 1$) horizontal holonomy vector: 
$$\Omega(X,\omega) = \{g\text{SL}(X, \omega) : g \Lambda \cap \left((0,1] \times \{0\}\right) \neq \emptyset\},$$ where $\Lambda$ is the set of holonomy vectors of $(X,\omega)$. Up to the action of $\text{SL}(X,\omega)$, these short horizontal holonomy vectors are then just $g \mathbf{v}_i$ for a unique $\mathbf{v}_i$. 

So $\Omega(X,\omega)$ then breaks up into a piece for each $\Gamma_i$, which we can parametrize as follows depending on whether $-I \in \text{SL}(X,\omega)$. 

\textbf{Case 1: $-I \in \text{SL}(X,\omega)$.}
In this case, $\Gamma_i \cong \ZZ \oplus \ZZ/2\ZZ$ and we can choose a generator $P_i$ for the infinite cyclic factor of $\Gamma_i$ that has eigenvalue $1$. Up to possibly replacing $P_i$ with its inverse, we have that there exists a $C_i \in \text{SL}(2, \RR)$ such that $$S_i = C_i P_i C_i^{-1} = \begin{bmatrix} 1 & \alpha_i \\0 & 1\end{bmatrix}$$ for some $\alpha_i > 0$ and that $C_i (X, \omega)$ has a shortest horizontal holonomy vector of $(1,0)$. The piece of the Poincar\'{e} section associated to $\Gamma_i$ is then parametrized by matrices $$\mathbf{M}_{a,b} = \begin{bmatrix} a & b \\ 0 & 1/a \end{bmatrix}$$  with $-1 \leq a < 0$ or $0 < a \leq 1$ so that $\mathbf{M}_{a,b} C_i (X, \omega)$ has a short horizontal holonomy vector $(|a|,0)$. Furthermore, since $S_i$ and $-I$ are in the Veech group of $C_i (X, \omega)$, we need to quotient out the full set of $\mathbf{M}_{a,b}$ matrices by the subgroup generated by $S_i$ and $-I$. The result is that the Poincar\'{e} section piece associated to $\Gamma_i$ can be parametrized by $$\Omega_i = \{ (a,b) \in \RR^2 : 0 < a \leq 1 \text{ and } 1 - (\alpha_i)a < b \leq 1\},$$ where each $(a,b) \in \Omega_i$ corresponds to $g \text{SL}(X,\omega)$ for $g = \mathbf{M}_{a,b} C_i$. 

\begin{remark} 
\label{remark:poincare_choice} 
We make a remark here that while $\Omega_i$ is defined in this specific way in Uyanik and Work's paper, there is actually a lot more freedom in defining $\Omega_i$. We just need to choose a fundamental domain for the $\mathbf{M}_{a,b}$ matrices under the action of $\langle S_i, -I\rangle$. For any $m, c \in \mathbb{R}$, another such fundamental domain is 
$$\Omega_i = \{(a,b) \in \RR^2 : 0 < a \leq 1 \text{ and } ma+c - (\alpha_i)a < b \leq ma + c\}.$$ 
That is, instead of choosing $\Omega_i$ to be a triangle whose top line is the line $b=1$ for $0 < a \leq 1$, we choose $\Omega_i$ to be a triangle whose top line is the line $b = ma + c$ for some slope $m$ and $b$-intercept $c$. We see the distinction between these two Poincar\'{e} section pieces in Figure \ref{fig:poincare_sections}.

\begin{figure}[ht]
\centering
\begin{tikzpicture}
\draw [fill = blue!25] (0,3) -- (3,3) -- (3,0) -- (0,3);
\node at (4.5,0) {$a$};
\node at (0,4.5) {$b$}; 
\node at (-.3, 3) {$1$};
\draw[thick, <->] (-1, 0) -- (4,0); 
\draw[thick, <->] (0,-2) -- (0,4); 
\draw (3,-.1) -- (3,.1); 
\node at (3.3,.3) {$1$}; 

\node at (1.7, 3.3) {$b = 1$}; 

\node at (1.4, -.5) {$b = 1 - (\alpha_i)a$}; 
\end{tikzpicture}
\hspace{1cm}
\begin{tikzpicture}
\draw [fill = blue!25] (0,1) -- (3,-1) -- (3,-4) -- (0,1);
\node at (4.5,0) {$a$};
\node at (0,2.5) {$b$}; 
\node at (-.3, 1) {$c$};
\draw[thick, <->] (-1, 0) -- (4,0); 
\draw[thick, <->] (0,-4) -- (0,2); 
\draw (3,-.1) -- (3,.1); 
\node at (3,.3) {$1$}; 

\node at (1.8, .7) {$b = m a + c$}; 

\node at (.75, -3) {$b = (m-\alpha_i)a +c$}; 
\end{tikzpicture}
\caption{Two possible Poincar\'{e} section pieces $\Omega_i$.}
\label{fig:poincare_sections}
\end{figure}
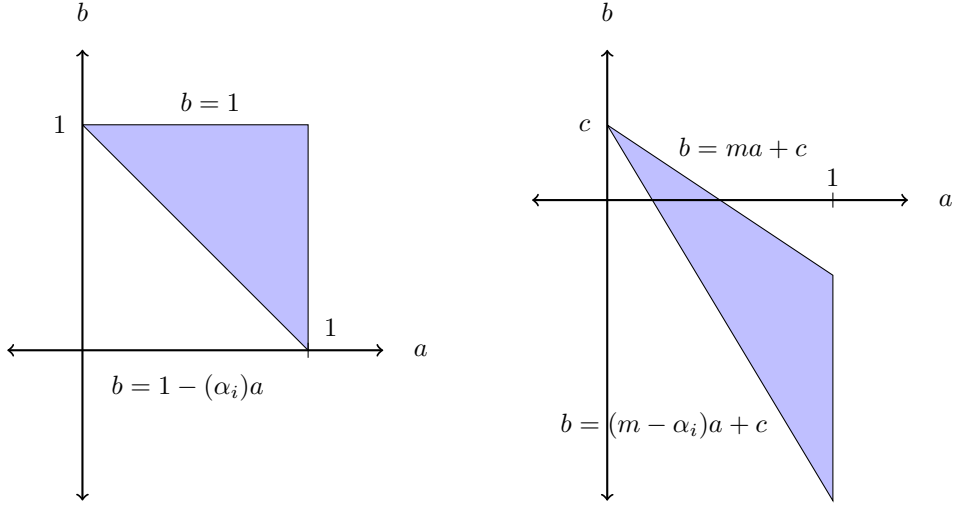

We further note that we can make similar modifications to $\Omega_i$ in Case 2 below as well. In this case, there will be another triangle with $a < 0$, and we have the freedom to choose the top line of the triangles with $a > 0$ and $a < 0$ independently.  These modifications will be integral in our finiteness proofs. 
\end{remark}

\textbf{Case 2: $-I \not \in \text{SL}(X,\omega)$.}
This case breaks up into two subcases depending on whether the generator $P_i$ of $\Gamma_i \cong \ZZ$ has eigenvalue $1$ or $-1$. 

If $P_i$ has eigenvalue $1$, then we again can find $$S_i = C_i P_i C_i^{-1} = \begin{bmatrix} 1 & \alpha_i \\0 & 1\end{bmatrix}$$ for some $\alpha_i > 0$ and that $C_i (X, \omega)$ has a shortest horizontal holonomy vector of $(1,0)$. We again have that the matrices $\mathbf{M}_{a,b}$ parameterize the Poincar\'{e} section piece, but now we only can quotient out by the subgroup generated by $S_i$. The result is that the Poincar\'{e} section piece associated to $\Gamma_i$ can be parameterized by $$\Omega_i = \{ (a,b) \in \RR^2 : 0 < a \leq 1 \text{ and } 1 - (\alpha_i)a < b \leq 1\} \bigcup \{ (a,b) \in \RR^2 : -1 \leq a < 0 \text{ and } 1 + (\alpha_i)a < b \leq 1\},$$ where each $(a,b) \in \Omega_i$ corresponds to $g \text{SL}(X,\omega)$ for $g = \mathbf{M}_{a,b} C_i$. 

When $P_i$ has eigenvalue $-1$, we can only find  $C_i \in \text{SL}(2, \RR)$ such that $$S_i = C_iP_iC_i^{-1} = \begin{bmatrix} -1 & \alpha_i \\ 0 & -1\end{bmatrix},$$ where $\alpha_i > 0$ and $C_i(X,\omega)$ has a shortest horizontal holonomy vector of $(1,0)$. We again quotient out our set of $\mathbf{M}_{a,b}$ matrices by the subgroup generated by $S_i$. The resulting Poincar\'{e} section piece associated to $\Gamma_i$ can be parameterized by $$\Omega_i = \{(a,b) \in \RR^2 : 0 < a \leq 1 \text{ and } 1 - (2 \alpha_i)a < b \leq 1\},$$ where each $(a,b) \in \Omega_i$ corresponds to $g\text{SL}(X,\omega)$ for $g = \mathbf{M}_{a,b}C_i$. 

Having established what each piece of the Poincar\'{e} section associated to each $\Gamma_i$ looks like, we also need to find the measure on the whole Poincar\'{e} section. The measure on the Poincar\'{e} section is the unique ergodic measure $\mu$ on $\Omega(X,\omega)$, which is a scaled copy of the Lebesgue measure on each of these pieces $\Omega_i$ of $\RR^2$. 

Upon finding the Poincar\'{e} section pieces, the return time function function of the horocycle flow at a point $\mathbf{M}_{a,b} C_i (X,\omega)$ is the smallest positive slope of a holonomy vector of $\mathbf{M}_{a,b}C_i(X,\omega)$ which short horizontal component. That is, if $\mathbf{v} = (x,y)$ is the holonomy vector of $C_i(X,\omega)$ such that $\mathbf{M}_{a,b}(x,y)$ is the holonomy vector on $\mathbf{M}_{a,b}(x,y)$ with the smallest positive slope among all holonomy vectors with a horizontal component of length $\leq 1$, 
then the return time function at that point $(a,b) \in \Omega_i$ in the Poincar\'{e} section is given by the slope of $\mathbf{M}_{a,b}(x,y)$, which is $$\frac{y}{a(ax+by)}.$$ 

We call such a vector $\mathbf{v} = (x,y)$ a \textbf{winner} or \textbf{winning saddle connection}. We note that while technically $\mathbf{v}$ is the holonomy vector of a saddle connection, we will often use the terms holonomy vector and saddle connection interchangeably. Our proof that the slope gap distribution of a Veech surface has finitely many points of non-analyticity will rely on us showing that each piece $\Omega_i$ of the Poincar\'{e} section has finitely many winners. 

Each such $\mathbf{v}$ would then be a winner on a convex polygonal piece of $\Omega_i$. Furthermore, the cumulative distribution function of the slope gap distribution would then be given by areas between the hyperbolic return time function level curves and the sides of these polygons, and would therefore be piecewise real analytic with finitely many points of non-analyticity. 

\subsection{Examples and Difficulties}
\label{subsec:examples}

In this section, we will give an example of difficulties that arise from the choice of parameterization of the Poincar\'{e} section. In particular, it is possible for there to be infinitely many winning saddle connections under certain parameterizations, but only finitely many different winners under a different parameterization. For full computations of a gap distribution we refer to \cite{ACL} and \cite{UW}.

%
We will take the surface $\scrL$ in Figure \ref{fig:L-example} and analyze the winning saddle connection on the component $\Omega_1$ of the Poincar\'{e} section corresponding to the parabolic subgroup of $\text{SL}(\scrL)$ generated by $\begin{bmatrix} 1 & 1 \\ 0 & 1 \end{bmatrix}$. $\scrL$ is a $7$-square square-tiled surface with a single cone point.

\begin{figure}[ht]
\centering
\begin{tikzpicture}

\draw[gray] (1,0) -- (1,1) -- (1,2);
\draw[gray] (0,1) -- (1,1) -- (2,1);
\draw[gray] (0,2) -- (1,2);
\draw[gray] (0,3) -- (1,3);
\draw[gray] (0,4) -- (1,4);

\draw[-|-] (0,0) -- (1,0); 
\draw[-||-] (1,0) -- (2,0); 
\draw[-|-] (0,5) -- (1,5); 
\draw[-||-] (1,2) -- (2,2); 

\draw[-|||-] (0,0) -- (0,1);
\draw[-|||-] (2,0) -- (2,1);

\draw[-||||-] (0,1) -- (0,2);
\draw[-||||-] (2,1) -- (2,2);

\draw[ma={>}] (0,2) -- (0,3);
\draw[ma={>}] (1,2) -- (1,3);

\draw[ma={>>}] (0,3) -- (0,4);
\draw[ma={>>}] (1,3) -- (1,4);

\draw[ma={>>>}] (0,4) -- (0,5);
\draw[ma={>>>}] (1,4) -- (1,5);

\draw[fill = red] (0,0) circle [radius = 0.05];
\draw[fill = red] (1,0) circle [radius = 0.05];
\draw[fill = red] (2,0) circle [radius = 0.05];
\draw[fill = red] (2,2) circle [radius = 0.05];
\draw[fill = red] (1,2) circle [radius = 0.05];
\draw[fill = red] (0,2) circle [radius = 0.05];
\draw[fill = red] (0,5) circle [radius = 0.05];
\draw[fill = red] (1,5) circle [radius = 0.05];

\end{tikzpicture} 
\caption{The surface $\scrL$ with cone point in red}
\label{fig:L-example}
\end{figure}

Since $\begin{bmatrix} 1 & 1 \\ 0 & 1 \end{bmatrix}$ is in the Veech group, and $\scrL$ has a length $1$ horizontal saddle connection, the corresponding piece of the Poincar\'{e} section $\Omega_1$ can be parametrized by matrices $\mathbf{M}_{a,b} = \begin{bmatrix} a & b \\ 0 & a^{-1} \end{bmatrix}$ with $0 < a \le 1$ and $1 - a < b \le 1$. Notice that $\scrL$ has all saddle connections with coordinates $(n,2)$ and $(n,3)$ for $n \in \ZZ$, and no saddle connection with $y$-coordinate $1$.

\begin{prop}
\label{prop:y_coord_winner}
In a neighborhood of the point $(0,1)$ on $\Omega_1$, the winning saddle connection always has $y$-coordinate $2$.
\end{prop}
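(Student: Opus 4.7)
The plan is to compare the return-time contributions of candidate saddle connections of different $y$-coordinate. Recall that at $(a,b)\in\Omega_1$, a saddle connection $(x,y)$ of $\scrL$ is an eligible candidate precisely when $0<ax+by\le 1$ and $y>0$, and its contribution to the return time is $\tfrac{y}{a(ax+by)}$. Since $\scrL$ is square-tiled, every saddle connection has $(x,y)\in\ZZ^2$, and the paragraph preceding the proposition records that no saddle connection has $y=1$, while $(n,2)$ and $(n,3)$ are saddle connections for every $n\in\ZZ$; in particular the winner must satisfy $y\ge 2$.

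For the upper bound, I will produce an eligible $y=2$ candidate. The values $\{\,an+2b:n\in\ZZ\,\}$ form an arithmetic progression of common difference $a>0$, so some $n$ lands in $(1-a,1]$; the corresponding vector $(n,2)$ is a saddle connection (by the preceding remark), is eligible, and has return value at most $\tfrac{2}{a(1-a)}$.

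For the lower bound, any eligible $(x,y)$ with $y\ge 3$ satisfies $ax+by\le 1$, hence contributes at least $\tfrac{y}{a(ax+by)}\ge \tfrac{y}{a}\ge \tfrac{3}{a}$. Since $\tfrac{2}{a(1-a)}<\tfrac{3}{a}$ whenever $a<\tfrac{1}{3}$, the $y=2$ candidate above strictly beats every $y\ge 3$ competitor in the region $a<\tfrac{1}{3}$. This region is a one-sided neighborhood of $(0,1)$ inside $\Omega_1$, so the winner there must have $y$-coordinate exactly $2$, as claimed.

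The main (and essentially only) subtlety is that one does not need to enumerate which $(x,y)$ with large $y$ are actually saddle connections of $\scrL$: the crude inequality $ax+by\le 1$ from eligibility yields a uniform lower bound of $y/a$ for every $y\ge 3$ candidate at once, which reduces the analysis to the very short list $y\in\{2,3\}$ and a one-line comparison. The only place where the structure of the saddle-connection set is genuinely used is in producing an integer $n$ with $an+2b\in(1-a,1]$, where the availability of $(n,2)$ for \emph{every} $n\in\ZZ$ is crucial.
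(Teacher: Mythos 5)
Your proof is correct and follows essentially the same strategy as the paper's: observe that there is no $y=1$ saddle connection, use the density of $\{(n,2):n\in\ZZ\}$ among saddle connections to produce a $y=2$ competitor with horizontal component in $(1-a,1]$, and show that when $a<\tfrac{1}{3}$ it beats any $y\ge 3$ candidate. The only cosmetic difference is that you package the comparison as a uniform upper bound $\tfrac{2}{a(1-a)}$ on the $y=2$ return time versus a uniform lower bound $\tfrac{3}{a}$ on any $y\ge 3$ return time, whereas the paper manipulates the slope inequality directly; the underlying estimate $3(1-a)>2$ for $a<\tfrac13$ is identical.
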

\begin{proof}
Take a saddle connection $\mathbf{v} = (n,k)$ with $k > 0$ so that $\mathbf{M}_{a,b}\mathbf{v}$ with horizontal component $\leq 1$. We will show that if $k > 2$ and $a < \frac{1}{3}$ there is a saddle connection $\mathbf{w} = (m,2)$ so that the slope of $\mathbf{M}_{a,b}\mathbf{w}$ is less than the slope of $\mathbf{M}_{a,b}\mathbf{v}$, and so that $\mathbf{M}_{a,b}\mathbf{w}$ has short horizontal component. Since there are no saddle connections with $k = 1$, this implies that the winning saddle connection must have $y$-coordinate $2$.

The $x$-coordinate of $\mathbf{M}_{a,b}\mathbf{w}$ is $ma+2b$. Since $\scrL$ has all $(m,2)$ saddle connections, we may choose an $m$ so that $1-a < ma+2b \le 1$. The condition that $\textup{slope}(\mathbf{M}_{a,b}\mathbf{w}) < \textup{slope}(\mathbf{M}_{a,b}\mathbf{v})$, rearranges to

\[ (na+kb) < \frac{k}{2}(ma+2b) \]

If $k > 2$, then since $\scrL$ is square-tiled we have that $k \ge 3$, and when $a < 1/3$ we have that $ma+2b > 1-a \ge \frac{2}{3}$, thus $\frac{k}{2}(ma+2b) > 1$. Since $\mathbf{M}_{a,b}\mathbf{v}$ has a short horizontal component, we know that $na+kb \le 1$, so the above inequality is always true.
\end{proof}

Let $A_m$ be the region where the saddle connection $(-m,2)$ is the winning saddle connection. By Proposition \ref{prop:y_coord_winner}, in the top left corner of $\Omega_1$, $A_m$ is the region where $\mathbf{M}_{a,b}(-m,2) = (2b-ma,2a^{-1})$ has smallest slope among all saddle connections with $y$-coordinate $2$ and short horizontal component. The slope is $\frac{2a^{-1}}{2b-ma}$, so minimizing the slope is equivalent to maximizing $2b-ma$ with the constraint that $2b-ma \le 1$, or in other words, $-m = \lfloor \frac{1-2b}{a} \rfloor$. But as $a \rightarrow 0$, $b \rightarrow $1, so $-m \sim \frac{-1}{a} \rightarrow - \infty$. This implies that there infinitely many saddle connections that occur as winners in the top left corner of the Poincar\'{e} section.

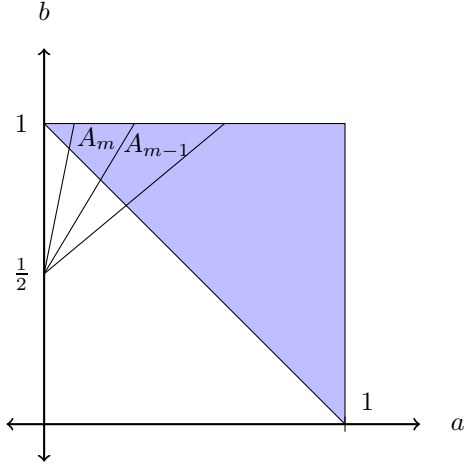
\begin{figure}[ht]
\centering
\begin{tikzpicture}

\draw [fill = blue!25] (0,4) -- (4,4) -- (4,0) -- (0,4);
\node at (5.5,0) {$a$};
\node at (0,5.5) {$b$}; 
\node at (-.3, 4) {$1$};
\draw[thick, <->] (-.5, 0) -- (5,0); 
\draw[thick, <->] (0,-.5) -- (0,5); 
\draw (4,-.1) -- (4,.1); 
\node at (4.3,.3) {$1$}; 

\draw (0,2) -- (.4,4);
\draw (0,2) -- (1.2,4);
\draw (0,2) -- (2.4,4);

\node at (.7 ,3.8) {$A_m$};
\node at (1.5,3.7) {$A_{m-1}$};
\node at (-.3, 2) {$\frac12$};

\end{tikzpicture} 
\caption{Regions $A_m$ where $(-m,2)$ is a winner}
\label{fig:infinitely_many_pieces}
\end{figure}

By Remark \ref{remark:poincare_choice} in Section \ref{subsec:algorithm}, we notice that we can change the parameterization of the Poincar\'{e} section. One problem in our previous parameterization was that there were infinitely many winners in the upper left hand corner $(0,1)$ of our Poincar\'{e} section. To fix this, we will change our parameterization so that the  upper left corner is at $(0,1/2)$ and the slope of the top line of our Poincar\'{e} section triangle is nicely compatible with the $(1,2)$ holonomy vector. This will ensure that there are finitely many winners in the top left corner and will result in finitely many winners across the entire Poincar\'{e} section. We will prove that we can always do this for arbitrary Veech surfaces in  Section \ref{sec:main}.

\begin{figure}[ht]
\centering
\begin{tikzpicture}

\fill [blue!25] (0,1.5) -- (3,0) -- (3,-1) -- (1.5,0) -- (0,1.5);
\fill [yellow!25] (3,-1) -- (3,-1.5) -- (1.5,0);
\fill [red!25] (0,1.5) -- (3,-1.5) -- (3,-3) -- (0,1.5);

\draw (3,0) -- (0,1.5) -- (1.5,0) -- (3,0) -- (3,-1) -- (1.5,0) -- (3,-1.5) -- (3,-1) -- (3,-3) -- (0,1.5);

\node at (4.5,0) {$a$};
\node at (0,2.5) {$b$}; 
\node at (-.3, 1.5) {$\frac{1}{2}$};
\node at (-.3, -3) {$-1$};
\draw[thick, <->] (-1, 0) -- (4,0); 
\draw[thick, <->] (0,-3.5) -- (0,2); 
\node at (3.3,.3) {$1$}; 

\node[blue] at (3.9, -.5) {$(1,2)$ wins}; 
\node[RedOrange] at (3.9, -1.25) {$(2,3)$ wins};
\node[red] at (3.9, -2.25) {$(2,2)$ wins};

\end{tikzpicture} 
\caption{The new Poincar\'{e} section breaks up into $3$ pieces, with saddle connections (1,2) winning in the blue region, (2,3) in the yellow region, and (2,2) in the red region}
\label{fig:better_parameterization}
\end{figure}
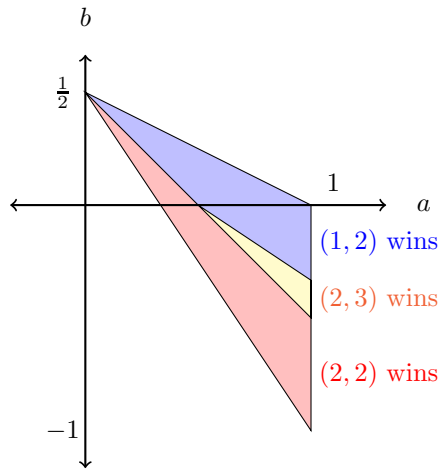

We will use the parameterization $0 < a \le 1$ and $\frac{1}{2}-\frac{3}{2}a < b \le \frac{1}{2} - \frac{1}{2}a$. This parameterization is chosen to ensure that the saddle connection $(1,2)$ of $\scrL$ wins in a neighborhood of the top line segment, which prevents the problem that arises in the previous parameterization.

In this case the only winners are the $(1,2)$, $(2,3)$ and $(2,2)$ saddle connections on $\scrL$. 

\begin{enumerate}
    \item $(1,2)$ wins in the region $$\left\{ (a,b) \bigg| 0 < a \le \frac{1}{2}, \frac{1}{2}-a < b \le \frac{1}{2} - \frac{1}{2}a \textup{ and } \frac{1}{3}-\frac{2}{3}a < b \right\}.$$
    \item $(2,3)$ wins in the region 
    
    $$ \left\{ (a,b) \bigg| \frac{1}{2} < a \le 1, \frac{1}{2}-a < b \le \frac{1}{3}-\frac{2}{3}b \right\}. $$
    \item $(2,2)$ wins in the region
    
    $$ \left\{ (a,b) \bigg| 0 < a \le 1, \frac{1}{2} -\frac{3}{2}a < b \le \frac{1}{2}-a \right\}.$$
\end{enumerate}

To see this, notice that the saddle connection $(x,y)$ is the winner at $(a,b)$ if $\mathbf{M}_{a,b}(x,y)$ has smallest positive slope amongst all saddle connections with short horizontal component. $\mathbf{M}_{a,b}(x,y)$ has short horizontal in the region $0 < a \le 1$, $\frac{-x}{y}a < b \le \frac{1}{y} - \frac{x}{y}a$. Minimizing the slope at $(a,b)$ is equivalent to maximizing $\frac{x}{y}$ over all saddle connections with a short horizontal component.

Working out the exact winners then comes down to casework. In this case, $\mathbf{M}_{(a,b)}(m,2)$ never has a short horizontal component for $m > 2$ and $(a,b)$ in the Poincar\'{e} section, and simple casework shows where $(1,2)$ and $(2,2)$ are the winners. For saddle connections with $y$-coordinate greater than $2$, we need to understand those with $x/y > \frac{1}{2}$ which can potentially win against $(1,2)$ or $(2,2)$. $(2,3)$ wins in the yellow region as $(2,2)$ does not have a short horizontal component for $(a,b)$ in that region. All other saddle connection with $y = 3$ and $x \ge 3$ do not have short horizontal component in the Poincar\'{e} section. For $y \ge 4$, a similar analysis shows that none of the saddle connections can appear as winners, giving the result.
%

\section{Main Theorem}
\label{sec:main}

In Section \ref{subsec:examples}, we examined the 7-square tiled surface $\scrL$ and saw that in one parameterization, it looked like the Poincar\'{e} section would admit infinitely many winning saddle connections and therefore give the possibility of infinitely many points of non-analyticity in the slope gap distribution. However, when we strategically chose a different parameterization of this piece of the Poincar\'{e} section, there were now only finitely many winners. Thus, this piece of the Poincar\'{e} section could only contribute finitely many points of non-analyticity to the slope gap distribution. We could then have repeated this process for the other pieces of the Poincar\'{e} section. 

This is one of the key ideas of the main theorem of this paper: 

\finite

This section is devoted to the proof of this theorem. We will begin by giving an outline of the proof and then will dive into the details of each step. 

\subsection{Outline}
Let us begin with an outline of the proof of the main theorem, Theorem \ref{thm:finite}. The idea is that after choosing strategic parameterizations of each piece of the Poincar\'{e} section of a Veech translation surface $(X,\omega)$, we will use compactness arguments to show that there are finitely many winners on each piece. 

\begin{enumerate}
    \item We begin with a Veech translation surface $(X,\omega)$ and focus on a piece of its Poincar\'{e} section corresponding to one maximal parabolic subgroup in $\text{SL}(X,\omega)$. Up to multiplication by an element of $\text{GL}(2,\mathbb{R})$, we will assume that the generator of the parabolic subgroup has a horizontal eigenvector and $(X,\omega)$ has a horizontal saddle connection of length $1$. Based on properties of the saddle connection set of $(X,\omega)$, we strategically choose a parameterization $T_X$ of this Poincar\'{e} section piece. $T_X$ will be some triangle in the plane. 
    \item For any saddle connection $\mathbf{v}$ of $(X,\omega)$, we will define a strip $S_\Omega(\mathbf{v})$ that gives a set of points$(a,b) \in \RR_{> 0} \times \RR$ where $\mathbf{v}$ is a potential winning saddle connection on the surface $\mathbf{M}_{a,b}(X,\omega) \in T_X$. We will start by showing various properties of these strips that we will make use of later on in the proof. 
    \item We will then show that there for every point $(a,b) \in T_X$ with $b > 0$, $(a,b)$ has an open neighborhood with finitely many winning saddle connections.
    \item We then move on to show that for points $(a,b) \in T_X$ with $b \leq 0$ that are not on the right boundary $a=1$ of $T_X$, $(a,b)$ has an open neighborhood with finitely many winning saddle connections. 
    \item Next, we show that on the right boundary $a=1$ of $T_X$, there are finitely many winning saddle connections. 
    \item Using the finiteness on the right boundary, we show that for any point $(a,b) \in T_X$ with $a=1$, $(a,b)$ has an open neighborhood with finitely many winning saddle connections. 
    \item By compactness of $T_X$, there is a finite cover of $T_X$ with the open neighborhoods of points $(a,b) \in T_X$ that we found in our previous steps. Since each of the these open neighborhoods had finitely many winners, we find that there are finitely many winning saddle connections across all of $T_X$. 
    \item Finally, we show that finitely many winners on each piece of the Poincar\'{e} section implies finitely many points of non-analyticity of the slope gap distribution. 
\end{enumerate}

\subsection{Proof}

Using the method of \cite{UW} outlined in Section \ref{subsec:algorithm}, it will suffice to show that every piece of the Poincar\'{e} section can be chosen so that there are only finitely many winning saddle connections. For most of the arguments in this section, we will fix a piece of the Poincar\'{e} section and will work exclusively with it.

We recall that there is a piece of the Poincar\'{e} section for each conjugacy class of maximal parabolic subgroup in $\text{SL}(X,\omega)$. We will now fix such a maximal parabolic subgroup $\Gamma_i$ and work with the corresponding component of the Poincar\'{e} section. Without loss of generality we may assume that $(X,\omega)$ has a horizontal saddle connection with $x$-component $1$ and that $\Gamma_i$ is generated by 

\[ P_i = \begin{bmatrix} 1 & \alpha_i \\ 0 &1 \end{bmatrix} \]

Using the notation of Section \ref{subsec:algorithm}, this is essentially replacing $(X,\omega)$ with $C_i(X,\omega)$.

Since $(X,\omega)$ is a Veech surface with a horizontal saddle connection, it has a horizontal cylinder decomposition (\cite{HS}), and therefore, for all $a \in \RR$ there are only finitely many heights $0 \le h \le a$ so that $(X,\omega)$ has a saddle connection with $y$-component $h$. Let $y_0 > 0$ be the shortest vertical component of a saddle connection on $(X,\omega)$, and let $x_0 > 0$  be the shortest horizontal component of a saddle connection at height $y_0$. Our first step is to use this saddle connection to give a parameterization of the Poincare section that is adaptaed to the geometry of $(X,\omega)$.

By Remark \ref{remark:poincare_choice}, we can choose the following parameterization of this piece of the Poincar\'{e} section, as pictured in Figure \ref{fig:T_X}: 

\[ T_X = \left\{  (a,b) \, \bigg| \, 0 < a \le 1, \frac{1-x_0 a}{y_0}-na \le b \le \frac{1-x_0 a}{y_0} \right\}. \]

Here, $n$ is either $\alpha_i$ or $2 \alpha_i$ depending on which one is needed to fully parameterize this piece of the Poincar\'{e} section, as described in Section \ref{subsec:algorithm}. In the case where $-I \not \in \text{SL}(X,\omega)$ and $P_1$ had eigenvalue $1$, the Poincar\'{e} section has an additional triangle with $a < 0$. In particular, we can choose this triangle so that it consists of points $(-a,-b)$ for $(a,b) \in T_X$. But we note that if $\mathbf{v}$ were the winning saddle connection for $\mathbf{M}_{a,b}(X,\omega)$, then $-\mathbf{v}$ is the winning saddle connection for $\mathbf{M}_{-a,-b}(X,\omega)$, and hence if proving that there are only finitely many winners, it suffices to consider only the portion with $a > 0$.

\begin{figure}[ht]
\centering
\begin{tikzpicture}
\draw [fill = blue!25] (0,1) -- (3,-1) -- (3,-4) -- (0,1);
\node at (4.5,0) {$a$};
\node at (0,2.5) {$b$}; 
\node at (-.3, 1) {$\frac{1}{y_0}$};
\draw[thick, <->] (-1, 0) -- (4,0); 
\draw[thick, <->] (0,-4) -- (0,2); 
\draw (3,-.1) -- (3,.1); 
\node at (3,.3) {$1$}; 

\node at (1.7, .8) {$b = \frac{-x_0}{y_0} a + \frac{1}{y_0}$}; 

\node at (.7, -3) {$b = (\frac{-x_0}{y_0}-n) a + \frac{1}{y_0}$}; 
\end{tikzpicture}
\caption{A Poincar\'{e} section piece for $(X,\omega)$ with $y_0 > 0$ the shortest vertical component and $x_0>0$ the corresponding shortest component of a saddle connection on $(X, \omega)$. }
\label{fig:T_X}
\end{figure}
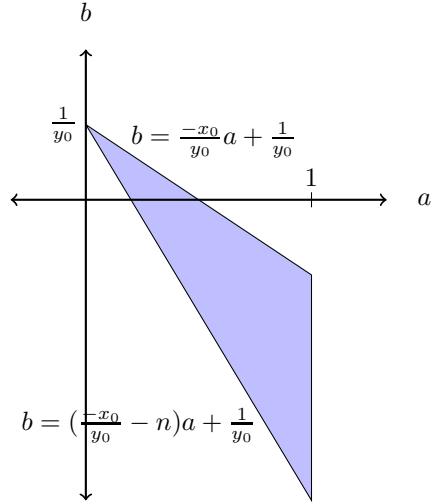

Our goal now is to prove that the return time function is piecewise real analytic with finitely many pieces. We will do so by proving that there are finitely many winning saddle connections $\mathbf{v}_1, \ldots, \mathbf{v}_n \in \Lambda(X,\omega)$ such that each point $(a,b) \in T_X$ has a winner $\mathbf{M}_{a,b}\mathbf{v}_i$ for some $1 \leq i \neq n$.  We will repeat this for every $T_X$ corresponding to each maximal parabolic subgroup.

To achieve this goal, we will first define an auxiliary set that will help us understand for what points $(a,b) \in T_X$ a particular $\mathbf{v} \in \Lambda(X,\omega)$ is a candidate winner. By a candidate winner, we mean that $\mathbf{M}_{a,b} \mathbf{v}$ has a positive $x$ coordinate at most $1$ and a positive $y$ coordinate. If $\mathbf{v} = (x,y)$, the $x$-coordinate condition is the condition that $0 < ax + by \leq 1$. We also note that for $\mathbf{M}_{a,b}(x,y)$ to be be a winner, we need that $a^{-1}y > 0$. Since $a> 0$ on $T_X$, the latter condition reduces to saying that $y > 0$.

\begin{defn}
\label{def:strip_omega}
Given a saddle connection $\mathbf{v} = (x,y)$ with $y > 0$, we define $\mathcal{S}_\Omega(\mathbf{v})$ as the strip of points $(a,b) \in \mathbb{R}_{> 0} \times \mathbb{R}$ such that $0 < ax+by\leq 1$. This corresponds to the set of surfaces $\mathbf{M}_{a,b}(X,\omega)$ for which $\mathbf{M}_{a,b}\mathbf{v}$ is a potential winning saddle connection. 
\end{defn}

\begin{figure}[ht]
\centering
\begin{tikzpicture}
\draw [fill = blue!25, blue!25] (0,0) -- (4,-3) -- (4,-2) -- (0,1) -- (0,0);
\node at (4.5,0) {$a$};
\node at (0,2.5) {$b$}; 
\node at (-.3, 1) {$\frac{1}{y}$};
\node at (1.5, .3) {$\frac{1}{x}$};
\draw[thick, <->] (-1, 0) -- (4,0); 
\draw[thick, <->] (0,-3) -- (0,2); 
\draw[dashed] (0,0) -- (4,-3); 
\draw (0,1) -- (4,-2);
\end{tikzpicture}
\caption{A strip $\mathcal{S}_\Omega(\mathbf{v})$ for $\mathbf{v} = (x,y)$. Here $y > 0$. The slope of the upper and lower lines of the strip is $-\frac{x}{y}$.}
\end{figure}
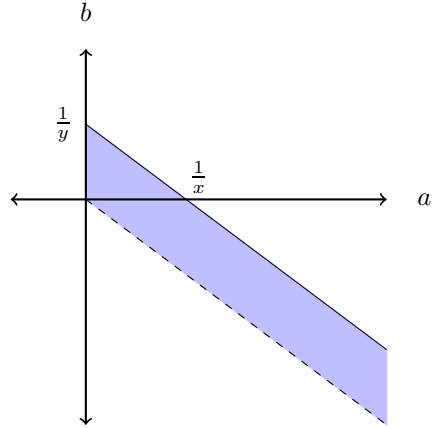

Let us note some properties of these strips $\mathcal{S}_\Omega(\mathbf{v})$ that we will be using repeatedly in our proofs. We recall that we are assuming without loss of generality that $(X,\omega)$ has a short horizontal saddle connection of length $1$. Considering the particular piece $T_X$ of the Poincar\'{e} section, we recall that $T_X$ is parametrized by matrices $\mathbf{M}_{a,b} = \begin{bmatrix} a & b \\ 0 & a^{-1} \end{bmatrix}$ so that $\mathbf{M}_{a,b}(X,\omega)$ has a short horizontal saddle connection of length $\leq 1$. 

Then, because $(X,\omega)$ is a Veech surface, it breaks up into horizontal cylinders and therefore there exists an $y_0 > 0$ such that there is a saddle connection with height $y_0$ and furthermore that every saddle connection with positive height has height $\geq y_0$. 

With these assumptions in in place, we note the following useful properties of the strips $\mathcal{S}_\Omega(\mathbf{v})$ that are used implicitly throughout the proof:

\begin{enumerate}
    \item The strip $\mathcal{S}_\Omega(\mathbf{v})$ for $\mathbf{v} = (x,y)$ is sandwiched between a solid line that intersects the $b$-axis at $\frac{1}{y}$ and a dotted line that intersects the $b$-axis at $0$. Both lines have slope $-\frac{x}{y}$. We also know that $y \geq y_0$, so $\frac{1}{y} \leq \frac{1}{y_0}$.  

    \item Fixing any $c > 0$, there are only finitely many $y$ coordinates of saddle connections $\mathbf{v}$ such that $S_\Omega(\mathbf{v})$ intersects the $y$-axis at any point $\geq c$. 
    
    This is because $(X, \omega)$ being a Veech surface and having a horizontal saddle connection implies that the surface breaks up into finitely many horizontal cylinders of heights $h_1, \ldots, h_n$ and every saddle connection with positive $y$-component must have a $y$-component that is a nonnegative linear combination of these $h_i$s. Since there are finitely many such $y$ values $\leq 1/c$, there are finitely many strips that intersect the $y$ axis at points $\geq c$.

    \item At a particular point $(a,b) \in T_X$, the winner is the saddle connection $\mathbf{v} = (x,y) \in \Lambda(X,\omega)$ such that $\mathbf{M}_{a,b}\mathbf{v} = (ax+by, a^{-1}y)$ has the least slope among those saddle connections satisfying $0 <ax+by\leq 1$ and $a^{-1}y > 0$. Since $a>0$ for any point in $\Omega_i^M$, this corresponds to the saddle connection with the greatest $\frac{x}{y}$ with $y > 0$. 
    
    In terms of our strips, we're fixing the point $(a,b)$ and looking for the strip $\mathcal{S}_\Omega(\mathbf{v})$ that contains $(a,b)$ and has the least slope, since each strip has slope $-\frac{x}{y}$.
    
    \item For any given $y > 0$, there are only finitely many saddle connection vectors $\mathbf{v} = (x,y)$ of $(X,\omega)$ with $x \geq 0$ such that $S_\Omega(\mathbf{v})$ intersects $T_X$. 
    
    This is because $S_\Omega(\mathbf{v})$ does not intersect $T_X$ for $\frac{x}{y}$ larger than some constant $C$ that depends on $T_X$ and $y$. Specifically, we can let $C = \frac{x_0}{y_0} + n$, the negative of the slope of the bottom line that defines the triangle $T_X$. Since the saddle connection set is discrete, there are finitely many $x \geq 0$ for a given $y$ such that $\frac{x}{y} \leq C$.  
\end{enumerate}

With these facts established, let us first show a lemma that reestablishes the known fact that the return time of the horocycle flow to any point $(a,b) \in T_X$ is finite and that will be useful in proving Lemma \ref{lem:finiteboundary} later. 
 
\begin{lemma} 
\label{lem:winners_exist}
Let $(a,b) \in T_X$ so that $(a,0)$ is a short horizontal saddle connection of $\mathbf{M}_{a,b}(X,\omega)$. Then $\mathbf{M}_{a,b}(X,\omega)$ has saddle connections $\mathbf{v}_1 = (x_1, y_1)$ and $\mathbf{v}_2 = (x_2,y_2)$ so that $y_1, y_2 > 0$ and $0 < x_1 \le a$ and $0 \le x_2 < a$. (These saddle connections might be the same). This implies that every point in $T_X$ has a winning saddle connection, or equivalently, that every point in $T_X$ is in some strip $S_\Omega(\mathbf{v})$ for some $\mathbf{v} = (x,y)$ with $y > 0$. 
\end{lemma}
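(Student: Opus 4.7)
The plan is to produce $\mathbf{v}_1$ and $\mathbf{v}_2$ by exploiting the horizontal cylinder decomposition of $\mathbf{M}_{a,b}(X,\omega)$. Since $(X,\omega)$ is a Veech surface with a horizontal saddle connection, the horizontal direction on $\mathbf{M}_{a,b}(X,\omega)$ is completely periodic and the surface decomposes into finitely many horizontal cylinders. Let $C$ denote the cylinder whose bottom boundary contains the short horizontal saddle connection $(a,0)$, with height $h > 0$ and circumference $w \geq a$, and let $P$ and $Q = P + (a,0)$ be the endpoints of $(a,0)$ (both cone points, possibly coinciding when $(a,0)$ is a loop).

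For $\mathbf{v}_2$: pick any cone point $R$ on the top boundary of $C$, at horizontal position $r \in [0,w)$ measured from $P$. The straight-line saddle connections within $C$ from $P$ (respectively $Q$) to $R$ have holonomy vectors $(r + kw, h)$ (respectively $(r - a + kw, h)$) for $k \in \ZZ$; the two residues $r$ and $(r-a) \bmod w$ differ by exactly $a$ on the circle $\RR/w\ZZ$, so when $r \in [0, 2a) \bmod w$ at least one of them falls in $[0,a)$, yielding $\mathbf{v}_2$. For $\mathbf{v}_1$ with $x_1 \in (0,a]$ the same construction typically gives the same saddle connection; the degenerate cases $x=0$ and $x=a$ are handled by taking a second saddle connection (for instance, the translate of $\mathbf{v}_2$ by $(a,0)$, which, if it fails to be a single saddle connection due to passing through an interior cone point, can be shortened to one of smaller length in the same direction).

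The main obstacle I anticipate is the regime where $w \gg a$ and no top cone point of $C$ satisfies $r \in [0, 2a) \bmod w$. To handle this case I would bring in two further tools. First, the parabolic element of the Veech group of $\mathbf{M}_{a,b}(X,\omega)$, namely the conjugate of $\begin{bmatrix} 1 & \alpha_i \\ 0 & 1 \end{bmatrix} \in \text{SL}(X,\omega)$, acts on the saddle connections of each fixed $y$-height as an arithmetic progression, so that one can reduce a wayward $x$-component by integer multiples of the step. Second, saddle connections that traverse multiple cylinders provide a richer family of holonomy vectors beyond those confined to $C$. Combining these with the fact that on $T_X$ the image $\mathbf{M}_{a,b}(x_0, y_0)$ has $x$-component controlled by $ax_0+by_0 \in [1 - na y_0, 1]$ — so a suitable parabolic translate lands in $[0,a]$ — should produce the required saddle connections at every point of $T_X$. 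The implication about strips then follows immediately, since $x_1 \in (0,a] \subseteq (0,1]$ and $y_1 > 0$ place $(a,b)$ in the strip $\mathcal{S}_\Omega(\mathbf{v})$ for $\mathbf{v} = \mathbf{M}_{a,b}^{-1}\mathbf{v}_1$.
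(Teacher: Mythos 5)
Your approach via the horizontal cylinder decomposition is genuinely different from the paper's, but it has a real gap that you yourself identify and then do not close. Restricting attention to saddle connections lying within the single cylinder $C$ containing $(a,0)$ fails precisely when no cone point on the top boundary of $C$ lies within horizontal distance roughly $a$ (mod $w$) of the segment $p\to q$; in that regime the vertical strip of width $a$ above $(a,0)$ exits through the top of $C$ without meeting a singularity, and the saddle connections you need necessarily cross several cylinders. Your proposed repair via the parabolic element does not work: conjugating $S_i$ by $\mathbf{M}_{a,b}$ yields $\begin{bmatrix} 1 & a^2\alpha_i \\ 0 & 1 \end{bmatrix}$, which shifts the $x$-component of a height-$Y$ saddle connection on $\mathbf{M}_{a,b}(X,\omega)$ by $a^2\alpha_i Y$ (equivalently $a\alpha_i y$ in original coordinates). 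There is no reason for this step to be $\leq a$ — for the example surface $\scrL$ in the paper one has $\alpha_i y_0 = 2$, so the step is $2a$ — and thus the parabolic orbit of a given saddle connection may skip over $(0,a]$ entirely. The appeal to ``saddle connections that traverse multiple cylinders'' is the right intuition but is left as a gesture rather than an argument.

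The paper sidesteps this casework altogether. Instead of working cylinder by cylinder, it develops the vertical strip of width $a$ directly above the open segment $p\to q$ on $\mathbf{M}_{a,b}(X,\omega)$ and lets it flow upward. Because the surface has finite area, this strip must either run into a cone point $r$ in the interior of its top edge, or return to overlap the original segment (completely or partially, with the partial case forcing $r = p$ or $r = q$ on the top edge). In each case one reads off $\mathbf{v}_1$ and $\mathbf{v}_2$ with the required $x$-component constraints from the geometry of the developed strip. The cylinder circumference $w$ never enters, and all of the cases you were worried about are absorbed into a single finite-area argument. If you want to push your approach through, you would need to make the ``strip continues into the next cylinder'' step precise — at which point you are essentially reconstructing the paper's vertical-strip development — so the cleaner move is to argue with the strip from the start.
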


\begin{proof}
Let us take a horizontal saddle connection on our surface $\mathbf{M}_{a,b}(X, \omega)$ with holonomy vector $(a,0)$, connecting two (possibly identical) cone points $p$ and $q$. Then, we will consider developing a width $a$ vertical strip on our surface extending upward with the open horizontal segment from $p$ to $q$ as its base. Since our surface is of finite area, this vertical strip must eventually hit a cone point $r$ or come back to overlap our original open segment from $p$ to $q$. Now we're going to define our vectors $\mathbf{v}_1$ and $\mathbf{v}_2$ in each case. 

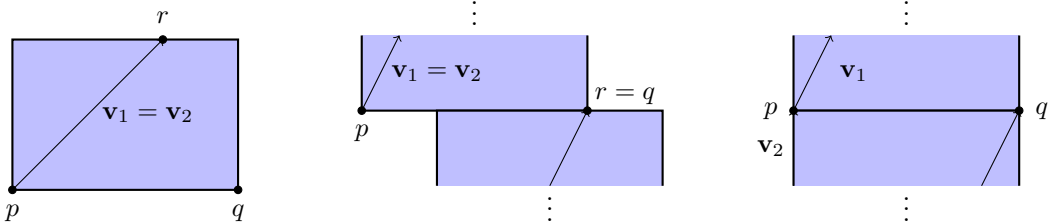
\begin{figure}[ht]
\centering
\begin{tikzpicture}
\draw[thick, fill = blue!25] (0,0) -- (3,0) -- (3,2) -- (0,2) -- (0,0); 
\draw[fill] (0,0) circle [radius = 0.05]; 
\draw[fill] (3,0) circle [radius = 0.05]; 
\draw[fill] (2,2) circle [radius = 0.05]; 
\node at (0,-.3) {$p$}; 
\node at (3,-.3) {$q$}; 
\node at (2, 2.3) {$r$}; 
\draw[->] (0,0) -- (2,2); 
\node at (1.8,1) {$\mathbf{v}_1 = \mathbf{v}_2$};
\end{tikzpicture} \hspace{1cm}
\begin{tikzpicture}
\draw[thick, fill = blue!25] (0,1) -- (0,0) -- (3,0) -- (3,1); 

\draw[thick, fill = blue!25] (1,-1) -- (1,0) -- (4,0) -- (4,-1); 

\draw[fill] (0,0) circle [radius = 0.05]; 
\draw[fill] (3,0) circle [radius = 0.05]; 
\draw[->] (0,0) -- (.5,1); 
\draw[->] (2.5,-1) -- (3,0); 
\node at (0,-.3) {$p$}; 
\node at (3.5,.2) {$r = q$}; 
\node at (1.5,1.4) {$\vdots$};
\node at (2.5,-1.2) {$\vdots$};
\node at (1,.5) {$\mathbf{v}_1 = \mathbf{v}_2$};
\end{tikzpicture}\hspace{1cm}
\begin{tikzpicture}
\draw[thick, fill = blue!25] (0,1) -- (0,0) -- (3,0) -- (3,1); 
\draw[thick, fill = blue!25] (0,-1) -- (0,0) -- (3,0) -- (3,-1); 
\draw[fill] (0,0) circle [radius = 0.05]; 
\draw[fill] (3,0) circle [radius = 0.05]; 
\draw[->] (0,0) -- (.5,1); 
\draw[->] (2.5,-1) -- (3,0); 
\draw[->] (0,-1) -- (0,0); 
\node at (-.3,0) {$p$}; 
\node at (3.3,0) {$q$}; 
\node at (1.5,1.4) {$\vdots$};
\node at (1.5,-1.2) {$\vdots$};
\node at (.8,.5) {$\mathbf{v}_1$};
\node at (-.3,-.5) {$\mathbf{v}_2$};
\end{tikzpicture}
\caption{The vectors $\mathbf{v}_1$ and $\mathbf{v}_2$ in the three different cases of vertical strip.}
\end{figure}

In the former case when the top edge of our vertical strip hits a cone point $r$ in the interior of the edge, the straight segment from $p$ to $r$ cutting through our vertical strip gives us both our saddle connections $\mathbf{v}_1=\mathbf{v}_2$. 

The latter case when the top edge of our vertical strip comes back to overlap our original open segment breaks up into two cases. If we have a complete overlap, then the saddle connection from $p$ on the bottom edge to $q$ on the top edge gives us our vector $\mathbf{v}_1$ and the saddle connection from $p$ on the bottom edge to $p$ on the top edge gives us our vector $\mathbf{v}_2$. If we have an incomplete overlap, then the top edge contains the cone point $r=p$ or $r=q$, and the saddle connection from $p$ on the bottom edge to $r$ on the top edge gives us both of our saddle connections $\mathbf{v}_1 = \mathbf{v}_2$.

In any of these cases letting $\mathbf{v} = \mathbf{M}_{a,b}^{-1} (\mathbf{v}_1)$ gives us that $S_\Omega(\mathbf{v})$ contains our initial point $(a,b)$ and $\mathbf{v}$ is a possible winning saddle connection. 
\end{proof}

The following lemma will help us show that there are finitely many winning saddle connections on certain sets in $T_X$. 

\begin{lemma} 
\label{lem:finitewinners}
Let $S$ be a closed set that is a subset of $S_\Omega(\mathbf{v})$ for $\mathbf{v} = (x,y)$ with  $y > 0$. Then, there are finitely many winning saddle connections on $S$.  
\end{lemma}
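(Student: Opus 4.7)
The plan is to combine a one-line slope characterization of winners with a compactness argument. Recall that at a point $(a,b)$ a saddle connection $\mathbf{w} = (x', y')$ with $y' > 0$ and $(a,b) \in S_\Omega(\mathbf{w})$ is a candidate winner, and among all such candidates the actual winner is the one with the greatest value of $x'/y'$, as noted in the list of properties of strips just before Lemma \ref{lem:winners_exist}. In particular, since $S \subset S_\Omega(\mathbf{v})$ means $\mathbf{v}$ is a candidate throughout $S$, any actual winner $\mathbf{w}$ on $S$ either equals $\mathbf{v}$ or satisfies the strict slope inequality $x'/y' > x/y$ at some $(a,b) \in S$.

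The key computation is the algebraic identity
\[
ax' + by' \;=\; \frac{y'}{y}\bigl(ax + by\bigr) \;+\; \frac{a\,(x'y - xy')}{y}.
\]
Since $a>0$ on $S$ and $x'y > xy'$, the second summand is strictly positive, so $ax'+by' > \tfrac{y'}{y}(ax+by)$. Combined with the strip constraint $ax'+by' \le 1$ for $(a,b) \in S_\Omega(\mathbf{w})$, this yields the a priori height bound
\[
y' \;<\; \frac{y}{ax+by}.
\]

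Now one invokes compactness. In every application of this lemma, $S$ will be a closed subset of the bounded triangle $T_X$ and is therefore compact, while the continuous function $(a,b) \mapsto ax+by$ is strictly positive on $S \subset S_\Omega(\mathbf{v})$. Hence it attains a positive minimum $\varepsilon>0$ on $S$, and every competing winner must satisfy $y' < y/\varepsilon$. The two discreteness facts recorded right after Definition \ref{def:strip_omega} now close the argument: the horizontal cylinder decomposition of the Veech surface $(X,\omega)$ forces the positive $y$-components of saddle connections to lie in a discrete subset of $\mathbb{R}$, so only finitely many values of $y'$ with $0 < y' < y/\varepsilon$ occur; and for each such $y'$, the compactness of $S$ bounds $a$ and $b$, so the constraint $0 < ax'+by' \le 1$ at some $(a,b)\in S$ restricts $x'$ to a bounded interval, of which the discrete set $\Lambda(X,\omega)$ contains only finitely many points. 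Finitely many $y'$ times finitely many $x'$ per $y'$ gives a finite list of candidate winners on $S$.

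The main (and only subtle) point is that the hypothesis on $S$ has to guarantee the uniform lower bound $\varepsilon>0$. For a closed but unbounded $S$ that approaches the lower boundary line $ax+by = 0$ of $S_\Omega(\mathbf{v})$, the bound on $y'$ blows up and the conclusion can genuinely fail, so the boundedness of $T_X$ --- implicit whenever we apply the lemma --- is the one piece of the argument doing real work beyond elementary algebra and discreteness.
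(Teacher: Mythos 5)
Your proof is correct and follows essentially the same strategy as the paper's: use the candidacy condition $0 < ax'+by' \le 1$ together with the slope inequality $x'/y'>x/y$ to derive an upper bound on $y'$ (the paper phrases this geometrically via the line $b=-\tfrac{x}{y}a+\tfrac{1}{h}$ and the $y$-intercepts of competing strips, whereas you make it explicit through the identity $ax'+by' = \tfrac{y'}{y}(ax+by) + \tfrac{a(x'y-xy')}{y}$), then invoke discreteness of vertical components and boundedness to finish. One genuine value-add in your write-up: you correctly observe that the stated hypothesis ``$S$ closed'' is not quite enough on its own --- an unbounded closed subset of the half-open strip can have $\inf_S(ax+by)=0$, in which case the bound on $y'$ evaporates --- so the argument secretly needs $S$ to be bounded (true in every application, since $S\subset \overline{T_X}$ or a closed ball), a point the paper's phrase ``there exists an $h>0$ so that $S$ is completely on or above the line $b=-\tfrac{x}{y}a+\tfrac{1}{h}$'' glosses over.
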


\begin{proof}
Let $S$ be a closed set contained in $S_\Omega(\mathbf{v})$ for a saddle connection $\mathbf{v} = (x,y)$ of $(X,\omega)$ with $y > 0$. By definition, we have that $\mathbf{v}$ is a potential winning saddle connection on all of $S$. That is, for any point $(a,b) \in S$, $\mathbf{M}_{a,b} \mathbf{v}$ has positive $y$ component and positive and short ($\leq 1$) $x$ component. 

 We recall that for a point $(a',b') \subset S$ to have winner $\mathbf{v'} = (x',y') \neq \mathbf{v} = (x,y)$, we need that $\mathbf{v'}$ is a saddle connection of $(X, \omega)$, $\frac{x'}{y'} > \frac{x}{y}$, and that $(a',b') \subset S_\Omega(\mathbf{v'})$. 

This corresponds to the strip $S_\Omega(\mathbf{v'})$ having a smaller slope than $S_\Omega(\mathbf{v})$ and still intersecting $S$. Given that $S$ is closed and the bottom boundary of $S_\Omega(\mathbf{v})$ is open, there exists an $h > 0$ so that the line $S$ is completely on or above the line $b = -\frac{x}{y}a + \frac{1}{h}$. Furthermore, since the left boundary of $S_\Omega(\mathbf{v})$ is open, $S$ is a positive distance away from the $y$ axis.

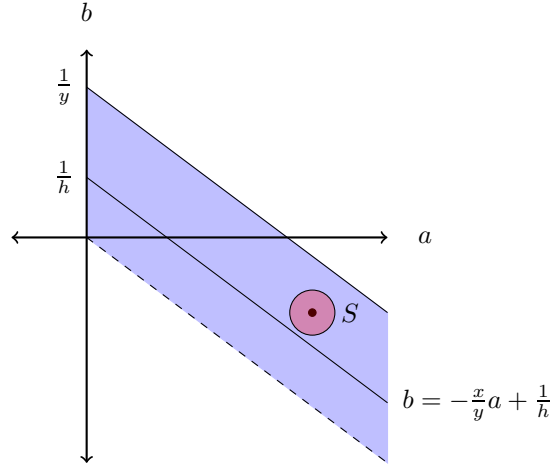
\begin{figure}[ht]
\centering
\begin{tikzpicture}
\draw [fill = blue!25, blue!25] (0,0) -- (4,-3) -- (4,-1) -- (0,2) -- (0,0);
\node at (4.5,0) {$a$};
\node at (0,3) {$b$}; 
\node at (-.3, 2) {$\frac{1}{y}$};
\node at (-.3, .8) {$\frac{1}{h}$};
\draw[thick, <->] (-1, 0) -- (4,0); 
\draw[thick, <->] (0,-3) -- (0,2.5); 
\draw[dashed] (0,0) -- (4,-3); 
\draw (0,2) -- (4,-1);

\draw[fill] (3, -1) circle [radius = 0.05];
\draw[fill = red, fill opacity = 0.3] (3,-1) circle [radius = .3];

\node at (3.5,-1) {$S$};
\draw (0,.8) -- (4,-2.2);
\node at (5.2, -2.2) {$b = -\frac{x}{y}a + \frac{1}{h}$}; 
\end{tikzpicture}
\caption{A choice of $1/h$ for a particular $S$.}
\end{figure}

Then, for $S_\Omega(\mathbf{v'})$ to intersect $S$ and for $\frac{x'}{y'} > \frac{x}{y}$, we need that $y' < h$ since otherwise the strip $S_\Omega(\mathbf{v'})$ would have $y$-intercepts $1/y' \leq 1/h$ and $0$ and would have smaller slope than that of $S_\Omega(\mathbf{v})$ and would therefore not intersect $S$. 

But since $(X,\omega)$ is a Veech surface with a horizontal saddle connection, it decomposes into finitely many horizontal cylinders. Therefore, the set of possible vertical components $y'$ of saddle connections are a discrete subset of $\mathbb{R}$ and thus, there are finitely many vertical components of saddle connections that satisfy $y' < h$. Since there are finitely many saddle connections in the vertical strip $(0,1]\times (0,\infty)$ with vertical component less than $h$, then there are finitely many possible winning saddle connections on $S$.
\end{proof}

We recall that our goal is to show that every point $(a,b) \in T_X$ has a neighborhood on which there are finitely many winners. This will allow us to use a compactness argument to prove that there are finitely many winners on all of $T_X$. Building off of the previous lemma, we show in the next lemma that certain points $(a,b) \in T_X$ have an open neighborhood on which there are finitely many winners. 

\begin{lemma}
\label{lem:interiorstrip}
Let $(a,b)$ be in the interior of some strip $S_\Omega(\mathbf{v})$. Then, there exists a neighborhood of $(a,b)$ with finitely many winning saddle connections. 
\end{lemma}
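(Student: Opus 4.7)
The plan is to reduce this lemma directly to Lemma \ref{lem:finitewinners}. The hypothesis puts $(a,b)$ in the \emph{interior} of $S_\Omega(\mathbf{v})$, which is an open subset of $\mathbb{R}_{>0}\times\mathbb{R}$ described by the strict inequalities $0 < ax+by < 1$ together with $a>0$. Since this interior is open, I can choose $\varepsilon>0$ small enough that the closed ball $\overline{B_\varepsilon(a,b)}$ of radius $\varepsilon$ around $(a,b)$ is entirely contained in the interior of $S_\Omega(\mathbf{v})$, and in particular is a closed subset of $S_\Omega(\mathbf{v})$.

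Now I would apply Lemma \ref{lem:finitewinners} to the closed set $S := \overline{B_\varepsilon(a,b)}$. That lemma guarantees that only finitely many saddle connections of $(X,\omega)$ can appear as winners at points of $S$. Taking the open ball $B_\varepsilon(a,b) \subset S$ then produces an open neighborhood of $(a,b)$ on which only finitely many saddle connections appear as winners, which is exactly the conclusion sought.

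The only point requiring care is confirming that $(a,b)$ does have a neighborhood whose closure lies in $S_\Omega(\mathbf{v})$; this is automatic because $(a,b)$ is assumed to be in the interior, so the strict inequalities defining the interior persist in a small enough ball by continuity. There is no real obstacle here, and no need to invoke any other structural fact about the surface $(X,\omega)$ beyond what Lemma \ref{lem:finitewinners} already packages up (namely, the horizontal cylinder decomposition giving a discrete set of possible $y$-components of saddle connections). This lemma is essentially a convenient repackaging of Lemma \ref{lem:finitewinners} for points lying safely inside a strip, to be combined later with the more delicate boundary cases where $(a,b)$ might sit on the boundary of every strip containing it.
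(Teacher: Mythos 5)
Your proof is correct and matches the paper's argument essentially verbatim: choose $\varepsilon>0$ so that $\overline{B_\varepsilon(a,b)}\subset S_\Omega(\mathbf{v})$, then apply Lemma \ref{lem:finitewinners} to this closed set. No differences worth noting.
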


\begin{proof}
Let $(a,b)$ be in the interior of the strip $S_\Omega(\mathbf{v})$ for $\mathbf{v} = (x, y)$ with $y > 0$ and $x \geq 0$. Then, we can find an $\epsilon > 0$ so that the closed ball of radius $\epsilon$ around $(a,b)$ remains in the interior of the strip. That is, we choose an $\epsilon > 0$ so that  $$\overline{B_\epsilon((a,b))} \subset S_\Omega(\mathbf{v}).$$

We can then use Lemma \ref{lem:finitewinners} to conclude that there are finitely many winning saddle connections on $\overline{B_\epsilon((a,b))}$ and therefore on $B_\epsilon((a,b))$.
\end{proof}

We now look at points $(a,b) \in T_X$ with $b > 0$ and show that these points have a neighborhood with finitely many winners. 

\begin{lemma}
\label{lem:b_pos}
Every point $(a,b) \in T_X$ such that $b > 0$ has a neighborhood $B_\epsilon((a,b))$ such that there are finitely many winning saddle connections on $B_\epsilon((a,b)) \cap T_X$. The same is also true for the point $(0,1/y_0)$. 
\end{lemma}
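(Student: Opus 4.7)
The plan is to exploit the fact that, by the choice of $T_X$, its top edge coincides with the top boundary of the strip $S_\Omega((x_0, y_0))$ associated to the saddle connection $(x_0, y_0)$ of smallest positive vertical component: both are the line $a x_0 + b y_0 = 1$. Consequently every $(a', b') \in T_X$ satisfies $a' x_0 + b' y_0 \leq 1$, and whenever $a' \geq 0$ and $b' > 0$ one also has $a' x_0 + b' y_0 > 0$. Thus a sufficiently small neighborhood of any $(a, b) \in T_X$ with $b > 0$, intersected with $T_X$, lies inside $S_\Omega((x_0, y_0))$, and finiteness will follow from Lemma \ref{lem:finitewinners}.

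For the generic case $a > 0$, $b > 0$, I would pick $\epsilon$ with $0 < \epsilon < \min(a, b)$ and set $S = \overline{B_\epsilon((a, b))} \cap T_X$. Then $\epsilon < a$ makes $S$ closed (bounded away from the line $a' = 0$), $\epsilon < b$ keeps every point of $S$ with $b' > 0$, and $T_X$-membership keeps $a' x_0 + b' y_0 \leq 1$; combining these shows $S \subset S_\Omega((x_0, y_0))$. Lemma \ref{lem:finitewinners} then produces finitely many winners on $S$, hence on $B_\epsilon((a, b)) \cap T_X$.

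The vertex $(0, 1/y_0)$ is the subtle case, because $a = 0$: any closed ball around the vertex meets the excluded line $a' = 0$, so no neighborhood of the vertex is itself a closed subset of the strip. I would handle this by applying Lemma \ref{lem:finitewinners} to the truncated closed sets $S_\delta = \overline{B_\epsilon((0, 1/y_0))} \cap \{a' \geq \delta\} \cap T_X$ for $\delta \in (0, \epsilon)$, each of which lies in $S_\Omega((x_0, y_0))$. Inspecting the proof of Lemma \ref{lem:finitewinners}, the finite bound on potential winners it produces depends only on (i) a positive lower bound for $a' x_0 + b' y_0$ on the set and (ii) an upper bound for the horizontal components of candidate winning saddle connections. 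The $T_X$-inequalities $a' x_0 + b' y_0 \geq 1 - n a' y_0 \geq 1 - n \epsilon y_0$ and $(1 - b' y_0)/a' \leq x_0 + n y_0$ make both bounds uniform in $\delta$, so for small enough $\epsilon$ the set of candidate winners is one fixed finite collection independent of $\delta$. Taking the union over $\delta$ recovers $B_\epsilon((0, 1/y_0)) \cap T_X$, which therefore has only finitely many winners.

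The principal obstacle is this vertex case: Lemma \ref{lem:finitewinners} does not apply directly to a neighborhood of $(0, 1/y_0)$, and one must unpack its proof to verify that the bounds on the heights and horizontal components of potential winners remain uniform as one approaches the $y$-axis. What makes this tractable is precisely the geometry of $T_X$, whose top and bottom edges have slopes $-x_0/y_0$ and $-(x_0 + n y_0)/y_0$ aligned with the saddle connection $(x_0, y_0)$, forcing the needed uniformity essentially for free.
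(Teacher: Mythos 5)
Your proposal is correct, and while the overall flavor matches the paper's, you take a somewhat different route in each half. For points with $a,b>0$, the paper splits into two subcases — interior of $T_X$ (apply Lemma~\ref{lem:interiorstrip}) and on the top edge (apply Lemma~\ref{lem:finitewinners} after intersecting a closed ball with the strip) — whereas you fold these together by noticing that $\epsilon<\min(a,b)$ already makes $\overline{B_\epsilon((a,b))}\cap T_X$ a compact subset of $S_\Omega((x_0,y_0))$, so Lemma~\ref{lem:finitewinners} applies in one shot. That is a genuine, if mild, simplification, and it hinges on the same observation that drives the paper: the top edge of $T_X$ is precisely the top boundary line $ax_0+by_0=1$ of $S_\Omega((x_0,y_0))$. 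For the vertex $(0,1/y_0)$, the paper argues directly by choosing $y_1>y_0$ to exclude candidate winners of height $>y_0$ and then bounding the horizontal component by $x_0+ny_0$ using the bottom edge of $T_X$. You instead apply Lemma~\ref{lem:finitewinners} to truncations $S_\delta=\overline{B_\epsilon}\cap\{a'\geq\delta\}\cap T_X$ and argue the resulting finite set of candidates is independent of $\delta$. This works, but it is worth being careful: as written, Lemma~\ref{lem:finitewinners}'s proof extracts a horizontal bound from the \emph{lower bound on $a'$} (distance from the $y$-axis), which for $S_\delta$ degenerates as $\delta\to 0$. You correctly sidestep this by replacing that step with the $T_X$-derived bound $(1-b'y_0)/a'\leq x_0+ny_0$, which is uniform in $\delta$; combined with the height bound $y'<h$ coming from the uniform lower bound $a'x_0+b'y_0\geq 1-n\epsilon y_0$, this forces $y'=y_0$ for $\epsilon$ small and gives the same finite candidate list the paper obtains. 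So your truncation device is essentially a repackaging of the paper's direct argument, trading a cleaner self-contained computation for a statement that Lemma~\ref{lem:finitewinners} extends to non-closed sets once the internal bounds are made uniform. Both are valid; the paper's is more explicit, and yours is a touch more implicit about why only height $y_0$ survives — worth spelling out that the derived $h$ satisfies $h<y_1$ once $\epsilon$ is small.
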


\begin{proof}
We recall that $T_X$ is a triangle bounded by the lines $b = \frac{-x_0}{y_0} a + \frac{1}{y_0}$ on top, the line $a=1$ on the right and the line $b =( \frac{-x_0}{y_0}-n) a + \frac{1}{y_0}$ on the bottom. 

We break up the proof of this lemma into cases, depending on the location of $(a,b) \in T_X \cup \{(0,1/y_0)\}$: 
\begin{enumerate}
    \item $0 < b < \frac{-x_0}{y_0} a + \frac{1}{y_0}$: These points are in the interior of $T_X$. We also notice that they must be in the interior of the strip $S_\Omega(\mathbf{v})$ for $\mathbf{v} = (x_0, y_0)$. Therefore, by Lemma \ref{lem:interiorstrip}, such a point $(a,b)$ must have a neighborhood with finitely many winning saddle connections. 
    \item $b= \frac{-x_0}{y_0} a + \frac{1}{y_0}$: These points are on the top line of $T_X$ but have $a > 0$. We recall that $y_0$ was chosen to be the least $y > 0$ for which $X$ has a saddle connection $(x,y_0)$. Then, $x_0$ was the least $x > 0$ for which $(x,y_0)$ was a saddle connection of $X$. 

    Let $(a,b)$ be any point on the top line of $T_X$ with $a > 0$ and let $\mathbf{v} = (x_0, y_0)$. Then $(a,b)$ is on the top line of the strip $S_\Omega((x_0,y_0))$. We can find an $\epsilon > 0$ such $\overline{B_\epsilon((a,b))} \cap S_\Omega((x_0,y_0))$ is a closed subset of $S_\Omega((x_0,y_0))$. By Lemma \ref{lem:finitewinners}, there are then finitely many winners on  $B_\epsilon((a,b)) \cap S_\Omega((x_0,y_0))$.  
    
    \item $(a,b) = (0,1/y_0)$: This point is not in $T_X$ but is the top left corner of the triangle that makes up $T_X$. 
    
    We can find a $y_1 > y_0$ such that every saddle connection $(x,y)$ of $X$ with $y > y_0$ must satisfy that $y \geq y_1$. Thus, we can then choose an $\epsilon > 0$ such $B_\epsilon((0,1/y_0)) \cap T_X \subset S_\Omega((x_0,y_0))$ and no strip $S_\Omega((x,y))$ for a saddle connection with $y > y_0$ and $x \geq 0$ intersects $B_\epsilon((0,1/y_0))$. This would imply that the only possible winning saddle connections on $B_\epsilon((0,1/y_0))$ are of the form $(x,y_0)$ for $x \geq x_0$. 
    
    But if we fix $y=y_0$, since the set of saddle connections $(x,y_0)$ is discrete and $T_X$ is bounded below by the line $b =( \frac{-x_0}{y_0}-n) a + \frac{1}{y_0}$, there are only finitely many saddle connections $\mathbf{v} = (x,y_0)$ of $(X, \omega)$ whose strip $S_\Omega(\mathbf{v})$ intersects $B_\epsilon((0,1/y_0))$ (exactly those $x$ such that $x_0 \leq x \leq x_0 + ny_0$). We have shown then that only finitely many strips $S_\Omega(\mathbf{v})$ for holonomy vectors $\mathbf{v}$ that could win over $(x_0, y_0)$ intersect $B_\epsilon((0,1/y_0))$ and therefore there are only finitely many winners on this neighborhood. 
\end{enumerate}
\end{proof}

Having established that points $(a,b) \in T_X$ with $b>0$ have neighborhoods with finitely many winners, we now turn to points $(a,b) \in T_X$ with $b\leq 0$. We first show that such points that are not on the right boundary of $T_X$ have finitely many winners. 

\begin{lemma}
\label{lem:b_neg}
Let $(a,b) \in T_X$ satisfy that $b \leq 0$ and $a < 1$. Then, there exists a neighborhood $B_\epsilon((a,b))$ such that there are finitely many winning saddle connections on $B_\epsilon((a,b)) \cap T_X$. 
\end{lemma}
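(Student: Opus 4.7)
The plan is to reduce Lemma \ref{lem:b_neg} directly to Lemma \ref{lem:interiorstrip}, by producing, for each point $(a,b) \in T_X$ with $b \leq 0$ and $a < 1$, a saddle connection $\mathbf{v}$ of $(X,\omega)$ such that $(a,b)$ lies in the \emph{interior} of the strip $S_\Omega(\mathbf{v})$. Once this is done, Lemma \ref{lem:interiorstrip} immediately supplies an open neighborhood $B_\epsilon((a,b))$ on which there are only finitely many winning saddle connections, and intersecting with $T_X$ preserves finiteness.

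To locate such an interior strip, I will apply Lemma \ref{lem:winners_exist} to the surface $\mathbf{M}_{a,b}(X,\omega)$, which furnishes a saddle connection $\mathbf{v}_1 = (x_1,y_1)$ on that surface with $y_1 > 0$ and $0 < x_1 \leq a$. Pulling back, $\mathbf{v} := \mathbf{M}_{a,b}^{-1}\mathbf{v}_1$ is a saddle connection of $(X,\omega)$; writing $\mathbf{v} = (x,y)$, the identity $\mathbf{M}_{a,b}(x,y) = (ax+by,\, y/a) = (x_1,y_1)$ forces $ax+by = x_1$ and $y = a y_1 > 0$. Consequently
\[ 0 \;<\; ax+by \;=\; x_1 \;\leq\; a \;<\; 1, \]
which is precisely the condition that $(a,b)$ belongs to the strict interior of $S_\Omega(\mathbf{v})$.

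The only delicate point is seeing why the hypothesis $a < 1$ is used. Lemma \ref{lem:winners_exist} guarantees only $x_1 \leq a$, not a strict inequality, so without the margin $a < 1$ the value $ax+by$ could equal $1$ and place $(a,b)$ on the closed upper boundary of $S_\Omega(\mathbf{v})$ rather than its interior. The hypothesis $a < 1$ provides exactly the cushion needed to push $ax+by$ strictly below $1$. Notably, the hypothesis $b \leq 0$ plays no explicit role in this argument: it merely identifies this lemma as the case-complement of Lemma \ref{lem:b_pos}, together with the right-boundary case $a=1$ that is handled separately in the subsequent lemmas. I do not anticipate any significant obstacle beyond correctly tracking the action of $\mathbf{M}_{a,b}^{-1}$ and verifying that the strict inequalities survive the pullback.
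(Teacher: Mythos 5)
Your proof is correct, and it takes a genuinely different route from the paper's. The paper starts from the mere fact that $(a,b)$ lies in \emph{some} strip $S_\Omega(\mathbf{v})$, and if $(a,b)$ sits on the (closed, top) boundary of that strip it performs a scaling trick: it passes to $p_\epsilon=(1+\epsilon)(a,b)$, argues that $p_\epsilon$ is still in $T_X$ for small $\epsilon>0$, takes the winner $\mathbf{w}_\epsilon$ at $p_\epsilon$, and then uses the fact that the strips $S_\Omega(\cdot)$ are cones-minus-origin in the radial sense (if $p\in S_\Omega(\mathbf{w})$ then $tp$ is interior for $0<t<1$) to conclude that $(a,b)=\tfrac{1}{1+\epsilon}p_\epsilon$ lies in the interior of $S_\Omega(\mathbf{w}_\epsilon)$. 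You instead reach inside the \emph{proof} of Lemma \ref{lem:winners_exist}: its constructed saddle connection $\mathbf{v}_1=(x_1,y_1)$ on $\mathbf{M}_{a,b}(X,\omega)$ satisfies $0<x_1\le a$, and pulling back by $\mathbf{M}_{a,b}^{-1}$ and using $a<1$ immediately gives $0<ax+by=x_1\le a<1$, placing $(a,b)$ in the open strip without any auxiliary scaling. Both arguments then hand off to Lemma \ref{lem:interiorstrip}. Your version is a little more economical and sidesteps the step where the paper needs $p_\epsilon$ to remain in $T_X$ (which would be delicate if $(a,b)$ lay on the top boundary of $T_X$, since scaling out by $(1+\epsilon)$ pushes points off that boundary). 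Your closing remark is also accurate: your argument never uses $b\le 0$ and in fact handles every $(a,b)\in T_X$ with $a<1$; the hypothesis $b\le 0$ is there only to delineate this lemma from Lemma \ref{lem:b_pos}, whose statement also covers the corner point $(0,1/y_0)\notin T_X$ that your argument cannot reach.
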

\begin{proof}
By Lemma \ref{lem:interiorstrip}, it suffices to show that $(a,b)$ lies on the interior of a strip $S_\Omega(\mathbf{v})$ for some saddle connection $\mathbf{v}$.

 Because $(a,b)$ is in $T_X$, we know that it lies in some strip $S_\Omega(\mathbf{v})$. If $(a,b)$ is in the interior of $S_\Omega(\mathbf{v})$, then we are done. Otherwise if $(a,b)$ is on the boundary of $S_\Omega(\mathbf{v})$, we consider the points $p_\epsilon = ((1+\epsilon)a,(1+\epsilon)b)$, with winner $\mathbf{w}_\epsilon$. Since $(a,b)$ lies in the interior of $T_X$, for $\epsilon > 0$ sufficiently small, $p_\epsilon$ also lies in $T_X$. Moreover notice that $p_\epsilon$ and $(a,b)$ lie on the same line through the origin. This immediately implies that $(a,b)$ lies in the interior of $S_\Omega(\mathbf{w}_\epsilon)$, as seen in Figure \ref{fig:strip_wepsilon}.

\begin{figure}[ht]
\centering
\begin{tikzpicture}
\draw [fill = blue!25, blue!25] (0,0) -- (4,-3) -- (4,-1.5) -- (0,1.5) -- (0,0);
\node at (2,-.6) {$(a,b)$};
\node at (3,-1.1) {$p_\epsilon$};
\draw[thick, <->] (-1, 0) -- (4,0); 
\draw[thick, <->] (0,-3) -- (0,2); 
\draw[dashed] (0,0) -- (4,-3); 
\draw[fill] (2,-1) circle [radius = .05];
\draw[fill] (3,-1.5) circle [radius = .05];
\draw[dashed] (0,0) -- (3,-1.5);
\draw (0,1.5) -- (4,-1.5);
\end{tikzpicture}
\caption{The strip $\mathcal{S}_\Omega(\mathbf{w}_\epsilon)$. }
\label{fig:strip_wepsilon}
\end{figure}
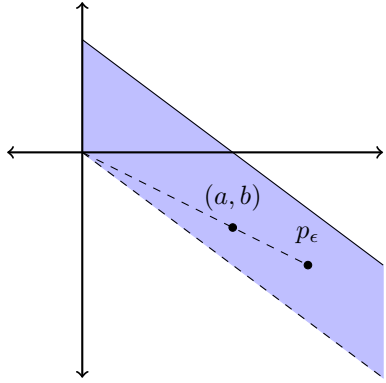

 Indeed, by the definition of $S_\Omega(\mathbf{v})$ for any holonomy vector $\mathbf{v}$ as a half-open strip with the open bottom boundary passing through the origin, for all points $p \in S_\Omega(\mathbf{v})$ the points $tp$ for $0 < t < 1$ lie in the interior, which gives the desired result.
\end{proof}

The combination of our previous lemmas shows that for all $(a,b) \in T_X$ away from the right vertical boundary, there are only finitely many winners in a neighborhood of $(a,b)$. We also want to show that for each $(1,b)$ on the right vertical boundary, there are only finitely many winners in a neighborhood. We will do this in two steps. First we will show that there are finitely many winning saddle connections along the right boundary of $T_X$. We will then use this result to prove that every point $(1,b)$ on the right boundary of $T_X$ has a neighborhood  with finitely many winning saddle connections. 

For our first result, we will need the following definition: 

\begin{defn}
\label{def:strip_lambda}
Given $(a,b) \in \mathbb{R}^2$, define the set $\mathcal{S}_\Lambda(a,b)$ as the strip of vectors $\mathbf{v} = (x,y) \in \mathbb{R}^2$ such that $0 < ax+by \leq 1$ and $y > 0$. This corresponds to the set of vectors that are potential winners on the surface $\mathbf{M}_{a,b}(X,\omega)$. 
\end{defn}

We think of this definition as a sort of dual to Definition \ref{def:strip_omega}, where instead of thinking of the surfaces corresponding to a particular winning saddle connection, we think about the the set of possible coordinates of winning saddle connections for a particular surface. 


\begin{lemma}
\label{lem:finiteboundary}
There are only finitely many winning saddle connections along the right vertical boundary $a=1$ of $T_X$.
\end{lemma}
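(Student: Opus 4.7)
The strategy is a contradiction argument combining compactness of the right boundary with discreteness of $\Lambda(X,\omega)$. Assume there are infinitely many distinct winners $\mathbf{v}_i=(x_i,y_i)$, each winning at some $(1,b_i)$ with $b_i\in\bigl[\tfrac{1-x_0}{y_0}-n,\tfrac{1-x_0}{y_0}\bigr]$. By compactness of this closed interval, pass to a subsequence with $b_i\to b^*$, and after a further subsequence assume the approach is one-sided.

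The crux is to show that the vertical components $y_i$ remain bounded. Suppose $y_i\to\infty$; the candidacy condition $0<x_i+b_iy_i\le 1$ together with $y_i\to\infty$ forces $x_i/y_i\to -b^*$. To obtain a contradiction, I will produce a reference saddle connection $\mathbf{w}=(x^*,y^*)$ with $y^*$ bounded, slope $x^*/y^*>-b^*$ strictly, and which is a candidate at $(1,b_i)$ for all large $i$. Such a $\mathbf{w}$ is furnished by Lemma \ref{lem:winners_exist} applied at $(1,b^*)$: the winner there is a candidate, so $x^*+b^*y^*>0$, which is exactly the strict inequality $x^*/y^*>-b^*$. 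Continuity of $b\mapsto x^*+by^*$ propagates candidacy of $\mathbf{w}$ to $(1,b_i)$ for all large $i$, except in the degenerate case where $b_i\to b^*$ strictly from above and $x^*+b^*y^*=1$. In that case I replace $\mathbf{w}$ by a winner at $(1,b^{**})$ for some $b^{**}\in(b^*,b_{\max}]$ close to $b^*$ (the case $b^*=b_{\max}$ makes right-approach vacuous), arranged so that its candidacy interval contains a right neighborhood of $b^*$. Once the reference is a candidate at $(1,b_i)$, the winning property of $\mathbf{v}_i$ forces $x_i/y_i\ge x^*/y^*>-b^*$, contradicting $x_i/y_i\to -b^*$.

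With the $y_i$ bounded, the horizontal cylinder decomposition of the Veech surface yields only finitely many admissible values of $y$ below any given height; for each such $y$, discreteness of the fiber $\{x:(x,y)\in\Lambda(X,\omega)\}$ together with the bounded range $x_i\in(-b_iy_i,1-b_iy_i]$ (using $b_i\in[b_{\min},b_{\max}]$) force $x_i$ to take only finitely many values. This contradicts distinctness of the $\mathbf{v}_i$ and completes the proof. The most delicate step in this plan is the reference-adjustment in the case $b_i\to b^{*+}$ with $x^*+b^*y^*=1$: one must quantitatively choose a $b^{**}>b^*$ so that the winner's candidacy strip there contains a full right neighborhood of $b^*$, which is where the explicit geometry of the Poincar\'e section parameterization from Section \ref{subsec:algorithm} becomes essential.
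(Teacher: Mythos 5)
Your overall strategy --- assume infinitely many winners, extract a convergent subsequence $b_i\to b^*$, and derive a contradiction from discreteness of $\Lambda(X,\omega)$ --- matches the paper's. You try to establish the contradiction by directly bounding the vertical components $y_i$, whereas the paper instead shows that the winners $(x_i,y_i)$ must (for large $i$) lie in a specific compact set $\mathcal{S}_n\setminus\mathcal{C}_{b'}$, where $\mathcal{C}_{b'}$ is a cone built from the winner at the limit point $(1,b^*)$ and, in the boundary case, from a second saddle connection sitting on the \emph{open} boundary of $\mathcal{S}_\Lambda(1,b^*)$. The two approaches are not equivalent, and the discrepancy is exactly where your argument has a genuine gap.

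Concretely, your proof hinges on producing a reference saddle connection $\mathbf{w}=(x^*,y^*)$ with $x^*/y^*>-b^*$ (strictly) that remains a candidate at $(1,b_i)$ for all large $i$. You correctly flag the problematic case $x^*+b^*y^*=1$ with $b_i\to b^{*+}$, but the proposed fix --- take $\mathbf{w}$ to be the winner $\mathbf{u}=(x_u,y_u)$ at $(1,b^{**})$ for some $b^{**}>b^*$ --- does not close it for two independent reasons. First, the winner at $(1,b^{**})$ is only guaranteed to satisfy $x_u/y_u>-b^{**}$, and since $-b^{**}<-b^*$ this is strictly weaker than the $x_u/y_u>-b^*$ you need; nothing prevents $-b^{**}<x_u/y_u\le -b^*$. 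Second, candidacy of $\mathbf{u}$ at $(1,b_i)$ for $b_i$ between $b^*$ and $b^{**}$ requires $x_u+b_iy_u>0$, which in the limit $b_i\to b^*$ is exactly $x_u+b^*y_u>0$ --- an inequality you cannot arrange by "choosing $b^{**}$ close to $b^*$." Indeed Lemma \ref{lem:winners_exist} applied at $(1,b^*)$ may only yield a saddle connection $(x'',y'')$ with $x''+b^*y''=0$ exactly, giving $x''/y''=-b^*$ with no strict margin; with such a reference the winners could satisfy $x_i/y_i\ge -b^*$ while still converging to $-b^*$ with $y_i\to\infty$. This is precisely the regime the paper's cone $\mathcal{C}_{b'}'$ is designed to rule out: it uses \emph{both} the winner on the closed boundary and the saddle connection on the open boundary to bound the $(x_i,y_i)$ to a compact set, and then concludes via discreteness. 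Your final step (bounded $y_i$ plus discrete fibers of $\Lambda$ over each admissible height implies finitely many winners) is sound and matches the paper, but the route to "bounded $y_i$" is not complete without an argument that plays the role of the cone.
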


\begin{proof}
By Lemma \ref{lem:winners_exist}, we know that every point $(1,b)$ on the right boundary of $T_X$ has a winning saddle connection. The set of $b \in \mathbb{R}$ such that $(1,b) \in T_X$ is some interval $[c,d]$. We note that since $\begin{bmatrix} 1 & \alpha \\ 0 & 1 \end{bmatrix}$ is in the Veech group of our surface for some $\alpha > 0$, it suffices to show that there are finitely many winners for $b \in [c + n\alpha, d + n\alpha]$ for any $n \in \mathbb{Z}$. This is because $(x,y)$ is the winner for $b'$ if and only if $(x-n\alpha y, y)$ is the winner for $b'+n\alpha$. For convenience, we will prove that there are finitely many winners for $b \in [M,N] = [c + n\alpha, d+n\alpha]$ for an $n$ such that $M,N > 0$. 

For each such $b$, we let $\mathbf{v}_b$ be its corresponding winning saddle connection. We wish to show that the set of vectors $\mathbf{v}_b$ is finite. We suppose that $\{\mathbf{v}_b\}$ is infinite. Then, since the set of $b \in [M,N]$, we must be able to find a convergent subsequence of $b_i \in \mathbb{R}$ such that $b_i \rightarrow b'$ and $b', b_i \in [M,N]$ for all $i$. In particular, $b' > 0$.

We claim now that $S_\Lambda(1,b')$ cannot have a winning saddle connection, which would contradict Lemma \ref{lem:winners_exist}. This corresponds to a saddle connection $(x,y)$ in the strip $S_\Lambda(1,b')$ that maximizes $\frac{x}{y}$. The strip $S_\Lambda(1,b')$ satisfies that $y > 0$ and $0 < x + b'y \leq 1$, or alternatively that $-\frac{1}{b'}x < y \leq  -\frac{1}{b'}x + \frac{1}{b'}$. We recall that $b' > 0$. Figure \ref{fig:strip_b'} shows a depiction of this strip.

\begin{figure}[ht]
\centering
\begin{tikzpicture}[scale = 1.1]
\draw [fill = blue!25, blue!25] (0,0) -- (1,0) -- (-2,3) -- (-2,2) -- (0,0); 
\draw[thick, <->] (-2, 0) -- (2,0); 
\draw[thick, <->] (0,-1) -- (0,3); 
\draw[dashed] (0,0) -- (-2,2);
\draw (1,0) -- (-2,3); 
\node at (1,-.3) {$1$}; 
\node at (.2,1.15) {$\frac{1}{b'}$};
\end{tikzpicture}
\caption{The strip $\mathcal{S}_\Lambda(1,b')$. }
\label{fig:strip_b'}
\end{figure}
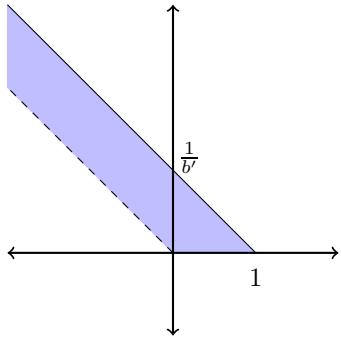

We suppose that the winning saddle connection $(x',y')$ for $b'$ lies in the interior of $S_\Lambda(1,b')$. If $\frac{x'}{y'} > \frac{x_i}{y_i}$ and $(x',y') \in S_\Lambda(1,b_i)$, then $(x_i,y_i)$ could not be the winner for $(1,b_i)$ because $(x',y')$ beats it and is still in the strip $S_\Lambda(1,b_i)$.

We let $\mathcal{C}_{b'}$ be the cone given by the intersection of $y < \frac{y'}{x'} x$ and $y > \frac{y'}{x'-1}x  - \frac{y'}{x'-1}$. We notice that if $(x_i, y_i) \in C_{b'}$, then it follows that $(x',y') \in S_\Lambda(1,b_i)$. One can see this algebraically or visually by noting that if $(x_i, y_i)$ is in the cone $\mathcal{C}_{b'}$ as depicted in Figure \ref{fig:strip_cone}, then $S_\Lambda(1,b_i)$ contains $(x_i, y_i)$ and is bounded by two lines with $x$-intercepts $0$ and $1$ and therefore must contain the point $(x',y')$. Furthermore, the first inequality defining the cone gives us that $\frac{x'}{y'} > \frac{x'}{y'}$.

Therefore, if $(x_i, y_i)$ is a winning saddle connection for some $(1,b_i)$ it cannot be in the open cone $\mathcal{C}_{b'}$ as defined above. Since $b_i \rightarrow b'$, this implies that for any $\epsilon > 0$ we can find an $n$ large enough such that the strips $S_\Lambda(1,b_i)$ all lie in a region $\mathcal{S}_n$ that is region where $(-\frac{1}{b'} + \epsilon)x \leq y \leq (-\frac{1}{b'} - \epsilon)x + (\frac{1}{b'} + \epsilon)$ and $y > 0$. Specifically, we will choose an $\epsilon$ such that the slopes of the two bounding lines of $\mathcal{S}_n$ are wedged between the slopes of the bounding lines of $\mathcal{C}_{b'}$. That is, we will choose $\epsilon > 0$ so that $(-\frac{1}{b'} - \epsilon) > \frac{y'}{x'}$ and $(-\frac{1}{b'} + \epsilon) < \frac{y'}{x'-1}$. We call this latter region $\mathcal{S}_n$. Figure \ref{fig:strip_cone} illustrates these regions. 


\begin{figure}[ht]
\centering
\begin{tikzpicture}[scale = 2]

\draw [fill = red, opacity = 0.2] (0,0) -- (1,0) -- (-1.75,3) -- (-2,3) -- (-2,1.75) -- (0,0); 

\draw[red] (0,0) -- (-2,1.75);
\draw[red] (1,0) -- (-1.75,3); 

\draw [fill = blue, blue, opacity = 0.2] (0,0) -- (1,0) -- (-2,3) -- (-2,2) -- (0,0); 
\draw[thick, <->] (-2, 0) -- (2,0); 
\draw[thick, <->] (0,-1) -- (0,3); 
\draw[dashed] (0,0) -- (-2,2);
\draw (1,0) -- (-2,3); 
\node at (1,-.3) {$1$}; 

\draw[fill] (-.25, .5) circle [radius = 0.025];
\node at (-.4, .8) {$(x',y')$}; 
\draw[orange, dashed] (0,0) -- (-1.5, 3); 
\draw[orange, dashed] (1,0) -- (-2,1.2);
\draw[fill = yellow, opacity = 0.2] (-1.5,3) -- (-2,3) -- (-2,1.2) -- (-.25,.5); 

\node[red] at (-.5,1.9) {$\mathcal{S}_n$}; 
\node[RedOrange] at (-1.6,.9) {$\mathcal{C}_{b'}$}; 
\node[blue] at (-1.5, 2) {$S_\Lambda(1,b')$};

\end{tikzpicture}
\caption{The strip $\mathcal{S}_\Lambda(1,b')$ with its winner $(x',y')$ and cone $\mathcal{C}_{b'}$, along with the region $\mathcal{S}_n$ containing the winners $(x_i, y_i)$ for $i \geq n$.}
\label{fig:strip_cone}
\end{figure}
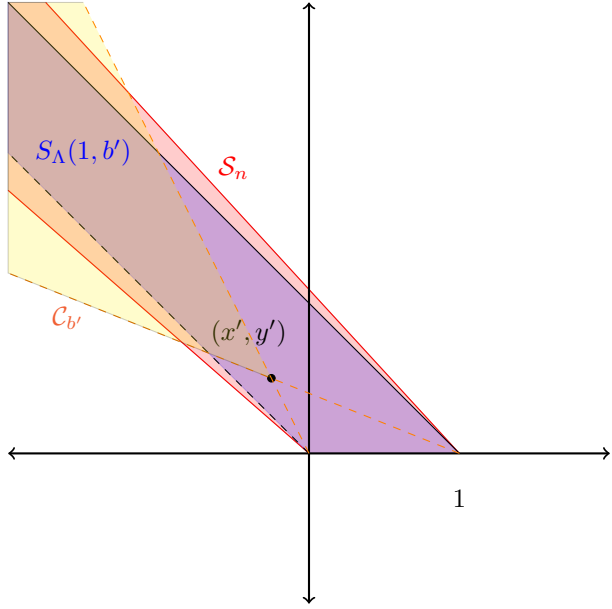

Given these conditions, we notice that $\mathcal{S}_n \backslash \mathcal{C}_{b'}$ is a compact set. With the possible exception of one point that equals $(x',y')$, the winning saddle connections $(x_i, y_i)$ for $i \geq n$ must all be in this region. But the set of holonomy vectors of saddle connections of $(X,\omega)$, of which $\{(x_i, y_i)\}$ is a discrete subset of $\mathbb{R}^2$ with no accumulation points, and so there are only finitely many $(x_i, y_i) \in \mathcal{S}_n \backslash \mathcal{C}_{b'}$. This contradicts the infiniteness of the set $\{(x_i, y_i)\}$. Hence, if $S_\Lambda ((1,b'))$ contained a point $(x',y')$, it could not be in the interior of the strip. 

We also consider the case when $(x', y')$ is in on the boundary of $S_\Lambda(1,b')$. That is, we suppose that $(x',y')$ is on the line $y = -\frac{1}{b'} x + \frac{1}{b}$. If there exists a saddle connection in the interior of $S_\Lambda(1,b')$, we can appeal to the reasoning in the previous case to find a contradiction. Else, Lemma \ref{lem:winners_exist} guarantees that there is also a holonomy vector $(x'', y'')$ on the open boundary $y = -\frac{1}{b'} x$ of $S_\Lambda(1,b')$. 

We now consider the cone $\mathcal{C}_{b'}'$ given by the intersection of the regions $y < \frac{y'}{x'}x$ and $y > \frac{y''}{x''-1}x  - \frac{y''}{x''-1}$. Similar to the previous case, we can find $n$ large enough such that the strips $S_\Lambda(1,b_i)$ all lie in a region $\mathcal{S}_n$ that is defined by $(-\frac{1}{b'} + \epsilon)x \leq y \leq (-\frac{1}{b'} - \epsilon)x + (\frac{1}{b'} + \epsilon)$ and $y > 0$. Here, we again choose $\epsilon > 0$ so that the slopes of the two bounding lines of $\mathcal{S}_n$ are wedged between the slopes of the bounding lines of $\mathcal{C}_{b'}'$. That is, we will choose $\epsilon > 0$ so that  $(-\frac{1}{b'} - \epsilon) > \frac{y'}{x'}$ and $(-\frac{1}{b'} + \epsilon) < \frac{y''}{x''-1}$. We call this latter region $\mathcal{S}_n$. Figure \ref{fig:strip_cone2} illustrates these regions.

\begin{figure}[ht]
\centering
\begin{tikzpicture}[scale = 2]

\draw [fill = red, opacity = 0.2] (0,0) -- (1,0) -- (-1.75,3) -- (-2,3) -- (-2,1.75) -- (0,0); 

\draw[red] (0,0) -- (-2,1.75);
\draw[red] (1,0) -- (-1.75,3); 

\draw [fill = blue, blue, opacity = 0.2] (0,0) -- (1,0) -- (-2,3) -- (-2,2) -- (0,0); 
\draw[thick, <->] (-2, 0) -- (2,0); 
\draw[thick, <->] (0,-1) -- (0,3); 
\draw[dashed] (0,0) -- (-2,2);
\draw (1,0) -- (-2,3); 
\node at (1,-.3) {$1$}; 

\draw[fill] (-1, 2) circle [radius = 0.025];
\node at (-.8, 2.2) {$(x',y')$}; 
\draw[fill] (-1, 1) circle [radius = 0.025];
\node at (-1.2, .8) {$(x'',y'')$}; 

\draw[orange, dashed] (0,0) -- (-1.5,3); 
\draw[orange, dashed] (1,0) -- (-2,1.5);
\draw[fill = yellow, opacity = 0.2] (-1.5,3) -- (-2,3) -- (-2,1.5) -- (-.33,.67); 

\node[red] at (-.4,1.8) {$\mathcal{S}_n$}; 
\node[RedOrange] at (-1.8,1.2) {$\mathcal{C}_{b'}$}; 
\node[blue] at (-1.5, 2) {$S_\Lambda(1,b')$};

\end{tikzpicture}
\caption{The strip $\mathcal{S}_\Lambda(1,b')$ with its winner $(x',y')$, the vector $(x'', y'')$ on its open boundary, its cone $\mathcal{C}_{b'}'$, along with the region $\mathcal{S}_n$ containing the winners $(x_i, y_i)$ for $i \geq n$.}
\label{fig:strip_cone2}
\end{figure}
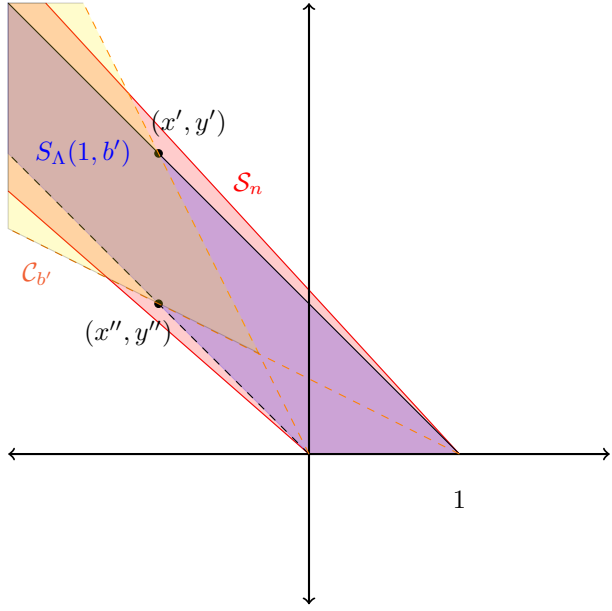

Since the set $\{(x_i, y_i)\}$ has no accumulation points and  $\mathcal{S}_n \backslash \mathcal{C}_{b'}'$ is compact, all but finitely many of the $\{(x_i,y_i)\}$ for $i \geq n$ must lie in the cone $\mathcal{C}_{b'}'$ and not be equal to $(x',y')$ or $(x'',y'')$. Let us consider one of these $(x_i,y_i)$. The corresponding strip $\mathcal{S}_\Lambda(1,b_i)$ is the region between two parallel lines that intersect the $x$-axis at $1$ and $0$, including the line through $1$ but not including the line through $0$. Therefore, $\mathcal{S}_\Lambda(1,b_i)$ must either contain $(x',y')$ or $(x'',y'')$, depending on if $b_i \leq b'$ or $b_i > b'$ respectively. If it contains $(x',y')$, then by similar reasoning as in the previous case $(x',y')$ beats $(x_i, y_i)$ and so $(x_i,y_i)$ could not have been the winner for $(1,b_i)$. If it contains $(x'',y'')$, then either $(x'',y'')$ beats $(x_i, y_i)$, which means that $(x_i,y_i)$ was not the winner, or $(x_i,y_i)$ was in the interior or $\mathcal{S}_\Lambda((1,b'))$, which contradicts that the interior of $S_\Lambda(1,b')$ did not contain any saddle connections. In either case, we have a contradiction.

Since we found a contradiction in both the cases when there was saddle connection in the interior and on the boundary of $S_\Lambda(1,b')$, we see that there must have been only finitely many winners on the right vertical boundary of $T_X$. 
\end{proof}

We can now use the previous lemma to show that points on the right boundary of $T_X$ have a neighborhood with finitely many winners. 

\begin{lemma}
\label{lem:boundarynbhd}
Given any point $(a,b) \in T_X$ with $a=1$, there exists a neighborhood $B_\epsilon((a,b))$ such that there are finitely many winning saddle connections on $B_\epsilon((a,b)) \cap T_X$. 
\end{lemma}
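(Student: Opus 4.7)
The plan is to argue by contradiction, extending the cone argument from the proof of Lemma \ref{lem:finiteboundary} from the one-dimensional right boundary $\{1\} \times [c, d]$ to the two-dimensional approach toward $(1, b_0)$ from within $T_X$. Suppose toward a contradiction that for every $\epsilon > 0$ the intersection $B_\epsilon((1, b_0)) \cap T_X$ contains infinitely many distinct winning saddle connections. Then we may choose a sequence $(a_i, b_i) \in T_X$ with $(a_i, b_i) \to (1, b_0)$ whose winners $\mathbf{v}_i = (x_i, y_i)$ are pairwise distinct. Because holonomy vectors form a discrete subset of $\mathbb{R}^2$, any bounded region contains only finitely many, so necessarily $|\mathbf{v}_i| \to \infty$; passing to a subsequence, $\hat{\mathbf{v}}_i := \mathbf{v}_i/|\mathbf{v}_i| \to \hat{\mathbf{v}}$ for some unit vector $\hat{\mathbf{v}}$.

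Next I would identify this limiting direction. Dividing the containment condition $0 < a_i x_i + b_i y_i \leq 1$ by $|\mathbf{v}_i|$ and letting $i \to \infty$ yields $(1, b_0) \cdot \hat{\mathbf{v}} = 0$; combined with $\hat{\mathbf{v}}$ having non-negative $y$-component (inherited from $y_i > 0$), this forces $\hat{\mathbf{v}} = (-b_0, 1)/\sqrt{1 + b_0^2}$, and in particular $x_i/y_i \to -b_0$. Applying Lemma \ref{lem:winners_exist} at $(1, b_0)$ produces saddle connections $\mathbf{w}^* = (X^*, Y^*)$ and $\mathbf{w}^{**} = (X^{**}, Y^{**})$ with $Y^*, Y^{**} > 0$, $0 < X^* + b_0 Y^* \leq 1$, and $0 \leq X^{**} + b_0 Y^{**} < 1$. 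If either of these vectors lies in the strict interior of $\mathcal{S}_\Lambda(1, b_0)$, then by continuity it also lies in $\mathcal{S}_\Lambda(a_i, b_i)$ for $i$ large; its ratio $X/Y$ satisfies $X/Y > -b_0$ (since $X + b_0 Y > 0$ and $Y > 0$), which together with the winner inequality $x_i/y_i \geq X/Y$ contradicts $x_i/y_i \to -b_0$.

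The obstructive case is therefore the one where $X^* + b_0 Y^* = 1$ and $X^{**} + b_0 Y^{**} = 0$, so that $\mathbf{w}^*$ sits on the closed top boundary and $\mathbf{w}^{**}$ on the open bottom boundary of $\mathcal{S}_\Lambda(1, b_0)$. Here I would repeat the cone construction from the proof of Lemma \ref{lem:finiteboundary}, taking $\mathcal{C}$ to be the cone in the $(x, y)$-plane bounded by the ray from the origin through $\mathbf{w}^*$ and the ray from $(1, 0)$ through $\mathbf{w}^{**}$. The half-open geometry of the strip, together with $a_i \to 1$ and $b_i \to b_0$, ensures that if $\mathbf{v}_i \in \mathcal{C}$ for large $i$, then at least one of $\mathbf{w}^*$ or $\mathbf{w}^{**}$ must lie in $\mathcal{S}_\Lambda(a_i, b_i)$; since $\mathbf{w}^*$ has ratio strictly greater than $-b_0$ and $\mathbf{w}^{**}$ has ratio exactly $-b_0$, one of them beats $\mathbf{v}_i$ for $i$ large, violating the winner property of $\mathbf{v}_i$. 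Hence the $\mathbf{v}_i$ must eventually lie outside $\mathcal{C}$; but a slight enlargement of $\mathcal{S}_\Lambda(1, b_0)$ containing $\mathcal{S}_\Lambda(a_i, b_i)$ for all large $i$, intersected with the complement of $\mathcal{C}$, is a compact subset of $\mathbb{R}^2$, so discreteness of the holonomy set rules out infinitely many distinct $\mathbf{v}_i$. This contradiction completes the proof. The main technical point is verifying that the cone-based compactness argument of Lemma \ref{lem:finiteboundary} carries over under the two-dimensional (rather than one-dimensional) convergence of $(a_i, b_i)$ to $(1, b_0)$.
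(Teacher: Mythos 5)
The proposal takes a genuinely different route from the paper: you try to generalize the $\Lambda$-space compactness-and-cone argument from the proof of Lemma~\ref{lem:finiteboundary} directly to sequences $(a_i,b_i)\to(1,b_0)$ approaching from inside $T_X$, whereas the paper first invokes Lemma~\ref{lem:finiteboundary} as a black box to identify the two winning strips $S_\Omega(\mathbf{v})$ and $S_\Omega(\mathbf{w})$ adjoining $(1,b')$ on the right edge, and then runs a geometric argument entirely in the $(a,b)$-plane (using steepness of $\Omega$-strips, exactly as in Lemma~\ref{lem:finitewinners}). Several of your intermediate steps are sound: the reduction to a diverging sequence of distinct winners via discreteness, the identification $\hat{\mathbf{v}} = (-b_0,1)/\sqrt{1+b_0^2}$, the conclusion $x_i/y_i \to -b_0$, and the immediate contradiction when one of $\mathbf{w}^*, \mathbf{w}^{**}$ lies in the open strip.

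However, there is a real gap in the ``obstructive case,'' at the sentence asserting that since $\mathbf{w}^{**}$ has ratio exactly $-b_0$, it beats $\mathbf{v}_i$ for large $i$. It does not: the winner $\mathbf{v}_i$, being maximal among all candidates including $\mathbf{w}^{**}$ whenever $\mathbf{w}^{**}\in\mathcal S_\Lambda(a_i,b_i)$, necessarily satisfies $x_i/y_i \geq X^{**}/Y^{**} = -b_0$, so $\mathbf{w}^{**}$ is at best a tie, never a strict beat. In the paper's Lemma~\ref{lem:finiteboundary} this is handled by observing that if $\mathbf{w}^{**}$ fails to beat $\mathbf{v}_i$, i.e.\ $x_i + b'y_i > 0$, then combined with $b_i > b'$ and $x_i + b_i y_i \le 1$ (inclusions through the common anchor points $(0,0)$ and $(1,0)$ of all the strips $\mathcal{S}_\Lambda(1,b_i)$), one gets $x_i+b'y_i<1$, so $\mathbf{v}_i$ lies in the interior of $\mathcal S_\Lambda(1,b')$, contradicting the standing assumption that the interior contains no holonomy vector. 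That pencil-of-lines trick is exactly the step that breaks when $a_i<1$: the upper boundary of $\mathcal{S}_\Lambda(a_i,b_i)$ now passes through $(1/a_i,0)$ with $1/a_i>1$, and the bound $a_i x_i + b_i y_i\le 1$ no longer forces $x_i + b_0 y_i\le 1$. Indeed $x_i + b_0 y_i = \frac{1}{a_i}\bigl[(a_i x_i + b_i y_i) + (a_i b_0 - b_i)y_i\bigr]$ is only bounded by $1/a_i$, not by $1$, since $(a_i b_0 - b_i)y_i$ could be arbitrarily close to $0$ even though it is negative. So the contradiction you need does not follow from the ingredients you've assembled, and this is not a cosmetic issue; it is the precise place where the two-dimensional approach fails to reduce to the one-dimensional one. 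The paper sidesteps this by keeping the compactness argument confined to the genuine one-dimensional boundary (Lemma~\ref{lem:finiteboundary}), and handling the two-dimensional neighborhood with the separate $\Omega$-space slope-comparison argument that does not require any $\Lambda$-space cone.
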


\begin{proof}
We suppose that we have a point $(a,b) \in T_X$ with $a=1$, $b=b'$. Then, Lemma \ref{lem:winners_exist} guarantees that $(1,b')$ is in some strip $S_\Omega(\mathbf{v})$. If $(1,b')$ is in the interior of $S_\Omega(\mathbf{v})$, then Lemma \ref{lem:interiorstrip} shows that there is a neighborhood of $(1,b')$ in $T_X$ with finitely many potential winners. 

We now consider the case where $(1,b')$ is not in the interior of any strip. This must then mean that $(1,b')$ is on the top boundary of some strip $S_\Omega(\mathbf{v})$. We will first deal with the case where $(1,b')$ is not on the top boundary of $T_X$. Then every point $(1,b'+c)$ for $c > 0$ small enough must also be in some winning strip. Since Lemma \ref{lem:finiteboundary} tells us that there are finitely many winning saddle connections on the right boundary of $T_X$ where $a=1$, this then implies that $(1,b')$ is on the bottom boundary of some other strip $S_\Omega(\mathbf{w})$, where $\mathbf{w}$ is the winning saddle connection for all $(1, b'+ c)$ for $c > 0$ small enough. 

Because there are finitely many winning saddle connections on the $a=1$ line of $T_X$ by Lemma \ref{lem:finiteboundary}, we can now choose an $\epsilon > 0$ small enough so that $\mathbf{w}$ is the winning saddle connection for $(1,b'+c)$ and $\mathbf{v}$ is the winning saddle connection for $(1,b'-c)$ for any $0 < c \leq \epsilon$.

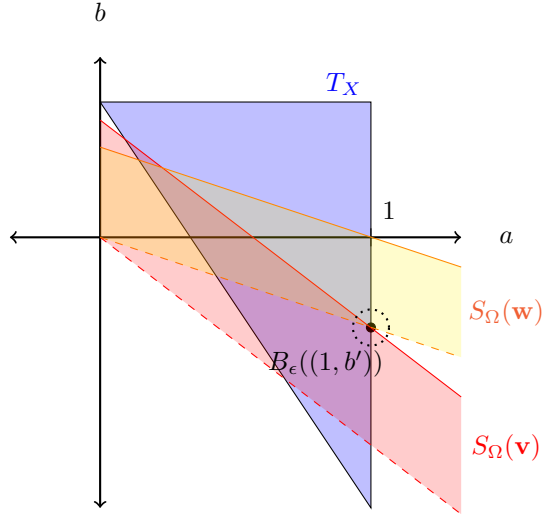
\begin{figure}[ht]
\centering
\begin{tikzpicture}[scale = 1.2]
\draw [fill = blue!25] (0,1.5) -- (3,1.5) -- (3,-3) -- (0,1.5);
\node at (4.5,0) {$a$};
\node at (0,2.5) {$b$}; 
\draw[thick, <->] (-1, 0) -- (4,0); 
\draw[thick, <->] (0,-3) -- (0,2); 
\draw (3,-.1) -- (3,.1); 
\node at (3.2,.3) {$1$}; 
\node[blue] at (2.7, 1.7) {$T_X$};

\draw[fill] (3,-1) circle [radius = 0.05]; 
\draw[red] (0,1.3) -- (4,-1.77);
\draw[red, dashed] (0,0) -- (4,-3.07);
\draw[orange] (0,1) -- (4,-.33);
\draw[orange, dashed] (0,0) -- (4,-1.33);
\fill[red, opacity = 0.2] (0,1.3) -- (4,-1.77) -- (4,-3.07) -- (0,0); 
\fill[yellow, opacity = 0.2] (0,1) -- (4,-.33) -- (4,-1.33) -- (0,0); 

\node at (2.5,-1.4) {$B_\epsilon((1,b'))$}; 
\node[red] at (4.5, -2.33) {$S_\Omega(\mathbf{v})$}; 
\node[RedOrange] at (4.5, -.83) {$S_\Omega(\mathbf{w})$}; 

\draw[thick, dotted] (3,-1) circle [radius = 0.2]; 
 
\end{tikzpicture}

\caption{The winning strips $S_\Omega(\mathbf{v})$ and $S_\Omega(\mathbf{w})$ near $(1,b')$ on the right boundary of $T_X$.}
\end{figure}

We claim now that there are finitely many winning saddle connections on $B_\epsilon((1,b'))$. We recall that for a point $(a,b) \in B_\epsilon((1,b')) \cap T_X$ to have a winning saddle connection other than $\mathbf{v}$ or $\mathbf{w}$, there must be a strip $S_\Omega(\mathbf{u})$ for a saddle connection $\mathbf{u}$ that is steeper (has more negative slope) than $S_\Omega(\mathbf{v})$ or $S_\Omega(\mathbf{w})$ (whichever is the winner at $(a,b)$) that contains $(a,b)$. 

Shrinking $\epsilon$ if necessary, $B_\epsilon((1,b'))$ lies above the line $b  = - \frac{x}{y}a + \frac{1}{h}$ for some $h > 0$ and $(x,y) = \mathbf{v}$. Then, as in the proof of Lemma \ref{lem:finitewinners}, we can show that there are finitely many strips saddle connections $\mathbf{u}$ of $(X,\omega)$ with strips $S_\Omega(\mathbf{u})$ intersect $B_\epsilon((1,b'))$ and that are at least as steep as $S_\Omega(\mathbf{v})$. 

If $S_\Omega(\mathbf{u})$ is at most as as steep as $S_\Omega(\mathbf{w})$, then it cannot win for any point in $B_\epsilon((1,b')) \cap T_X$ since $\mathbf{w}$ or $\mathbf{v}$ would win instead. 

If $S_\Omega(\mathbf{u})$ has steepness strictly between that of $\mathbf{w}$ and $\mathbf{v}$, then for $\mathbf{u}$ to be a winner for some point $(a,b) \in B_\epsilon((1,b')) \cap T_X$, we must have that $(a,b) \in S_\Omega(\mathbf{u}) \cap (S_\Omega(\mathbf{w}) \backslash S_\Omega(\mathbf{v}))$. But then, by slope considerations, $S_\Omega(\mathbf{u})$ must also intersect the $a=1$ boundary of $T_X$ in $B_\epsilon((1,b'))$ above the point $(1,b')$. But this contradicts that $\mathbf{w}$ and $\mathbf{v}$ were the only winners on the right boundary of $T_X$ in $B_\epsilon((1,b'))$. 

Hence, we have seen that only the finitely many saddle connections $\mathbf{u}$ with strips that intersect $B_\epsilon((1,b'))$ and have slope steeper than that of  $S_\Omega(\mathbf{v})$ can be winners on $B_\epsilon((1,b')) \cap T_X$. 

If $(1,b')$ were on the top boundary of $T_X$, then $(1,b')$ is on the top boundary of $S_\Omega(\mathbf{v})$ where $\mathbf{v} = (x_0, y_0)$. We can again reason as in the proof of Lemma \ref{lem:finitewinners} that we can find an $\epsilon > 0$ such that there are only finitely many saddle connections $\mathbf{u}$ with $S_\Omega(\mathbf{u})$ intersecting $B_\epsilon((1,b')) \cap T_X$ and with slope steeper than that of $\mathbf{v}$. This shows that there are finitely many winners on $B_\epsilon((1,b')) \cap T_X$.
\end{proof}

Combining these lemmas, this shows that for all points in $T_X$ there are finitely many winners in a neighborhood, and hence by compactness there are finitely many winners on $T_X$. 

\begin{proof}[Proof of Theorem \ref{thm:finite}] 
We will consider $\overline{T_X} = T_X \cup \{(0,1/y_0)\}$. This is a compact set. We showed in Lemmas \ref{lem:b_pos}, \ref{lem:b_neg}, and \ref{lem:boundarynbhd} that for any point $(a,b) \in \overline{T_X}$, we can find a neighborhood $B_\epsilon((a,b))$ such that there are finitely many possible winning saddle connections on $B_\epsilon((a,b)) \cap T_X$. Since $\overline{T_X}$ is compact, it is covered by finitely many of these neighborhoods. Since a finite union of finite sets is finite, the set of possible winners on $T_X$ is finite. 

Each winning saddle connection $\mathbf{v}_i$ would then be a winner on a convex polygonal piece of $T_X$. The cumulative distribution function of the slope gap distribution would then be given by the sums of areas between the level curves of the hyperbolic return time functions $\frac{y}{a(ax+by)}$ as described in Section \ref{subsec:algorithm} and the sides of these polygons. Since there are finitely many polygonal pieces, the cumulative distribution function and therefore also the slope gap distribution would be piecewise real analytic with finitely many points of non-analyticity.
\end{proof}

\section{Quadratic Tail Decay}\label{sec:decay} 

As an application of the finiteness result (Theorem \ref{thm:finite}), we prove
\tail

\begin{proof}

We find the decay of the tail on a piece of the Poincare section given by the triangle $T_X$. Doing this for all the pieces gives the decay of the tail. 

The proof of Theorem \ref{thm:finite} shows that there exists a minimal finite set of saddle connections $F\subset \Lambda(X,\omega)$ such that for any point in the triangle $T_X$, there is some $\mathbf{v} \in F$ with $\mathbf{M}_{a,b}\mathbf{v}$ being the winning saddle connection. Let  $S_\Omega(\mathbf{v})\subset T_X$ denote the strip where $\mathbf{M}_{a,b}\mathbf{v}$ could win and  $W_\Omega(\mathbf{v})\subset S_\Omega(\mathbf{v})$ denote where $\mathbf{M}_{a,b}\mathbf{v}$ does win.

Fix $\mathbf{v}=(x,y)\in F$. Then the tail on the piece $W_\Omega(\mathbf{v})$ is proportional to the area of the set points $(a,b)$ in $T_X$ with $$\text{Slope}(\mathbf{M}_{a,b}\mathbf{v}) = \frac{y}{a^2x+aby}>t\iff \frac{1}{at}- \frac{x}{y}a>b.$$
Let $m = \frac{x}{y}$. By adding the contribution that $W_\Omega(\mathbf{v})$  gives on the tail for each $\mathbf{v}\in F$ we get the full contribution to the tail. In what follows we work on one such winning saddle connection $\mathbf{v}$. Hence, it suffices to understand the portion of $W_\Omega(\mathbf{v})$ below the hyperbola $b=\frac{1}{at}-m a$. Notice that this hyperbola approaches the line $b=-ma$ from above. Moreover, notice that the line $b=-ma$ is the bottom boundary of the strip $S_\Omega(\mathbf{v})$.

 We have 3 situations depending on how the line $b=-ma$ intersects $T_X$, as shown in Figure \ref{fig:3cases}.

\begin{figure}[ht]
\centering
\begin{tikzpicture}
\begin{axis}[xmin=-0.5, xmax=1.1,
   		ymin=-1.25, ymax=0.75, axis lines = center, ticks = none, unit vector ratio=1 1 1]
    \addplot [domain=0:1, samples=100, name path=f, thick, color=blue!50]
        {0.5-0.75*x};
    
    \addplot [domain=0:1, samples=100, name path=g, thick, color=blue!50]
        {0.5-1.5*x};
        
    \addplot +[mark=none,color=blue!50,thick] coordinates {(1, -.25) (1,-1)};
    
    \addplot[blue!10, opacity=0.6] fill between[of=g and f, soft clip={domain=0:1}];
    
    \addplot [domain=0:1, samples=100, name path=h, thick, color=red!50]
        {-1.25*x};
        
    \addplot [domain=0:1, samples=100, name path=j, thick, color=red!50]
        {1/(8*x) -1.25*x};

    \addplot[red!50, opacity=0.8] fill between[of=g and f, soft clip={domain=0:0.207}];
    
    \addplot[red!50, opacity=0.8] fill between[of=g and j, soft clip={domain=0.207:0.293}];
\end{axis}
\node at (2,-.5) {Case (1)};
\end{tikzpicture}
\hspace{1cm}
\begin{tikzpicture}
\begin{axis}[xmin=-0.5, xmax=1.1,
   		ymin=-1.25, ymax=0.75, axis lines = center, ticks = none, unit vector ratio=1 1 1]
    \addplot [domain=0:1, samples=100, name path=f, thick, color=blue!50]
        {0.5-0.75*x};
    
    \addplot [domain=0:1, samples=100, name path=g, thick, color=blue!50]
        {0.5-1.5*x};
        
    \addplot +[mark=none,color=blue!50,thick] coordinates {(1, -.25) (1,-1)};
    
    \addplot[blue!10, opacity=0.6] fill between[of=g and f, soft clip={domain=0:1}];
    
    \addplot [domain=0:1.25, samples=100, name path=h, thick, color=red!50]
        {-1*x};
        
    \addplot [domain=0:1.25, samples=100, name path=j, thick, color=red!50]
        {1/(10*x) -1*x};

    \addplot[red!50, opacity=0.8] fill between[of=g and f, soft clip={domain=0:0.183}];
    
    \addplot[red!50, opacity=0.8] fill between[of=g and j, soft clip={domain=0.183:0.276}];

    \addplot[red!50, opacity=0.8] fill between[of=g and j, soft clip={domain=0.724:1}];
\end{axis}
\node at (2,-.5) {Case (2)};
\end{tikzpicture}
\hspace{1cm}
\begin{tikzpicture}
\begin{axis}[xmin=-0.5, xmax=1.1,
   		ymin=-1.25, ymax=0.75, axis lines = center, ticks = none, unit vector ratio=1 1 1]
    \addplot [domain=0:1, samples=100, name path=f, thick, color=blue!50]
        {0.5-0.75*x};
    
    \addplot [domain=0:1, samples=100, name path=g, thick, color=blue!50]
        {0.5-1.5*x};
        
    \addplot +[mark=none,color=blue!50,thick] coordinates {(1, -.25) (1,-1)};
    
    \addplot[blue!10, opacity=0.6] fill between[of=g and f, soft clip={domain=0:1}];
    
    \addplot [domain=0:1.25, samples=100, name path=h, thick, color=red!50]
        {-.75*x};
        
    \addplot [domain=0:1.25, samples=100, name path=j, thick, color=red!50]
        {1/(16*x) -.75*x};

    

\end{axis}
\node at (2,-.5) {Case (3)};
\end{tikzpicture}
\caption{An illustration of the three cases in the proof of Theorem \ref{thm:tail}.}
\label{fig:3cases}
\end{figure}
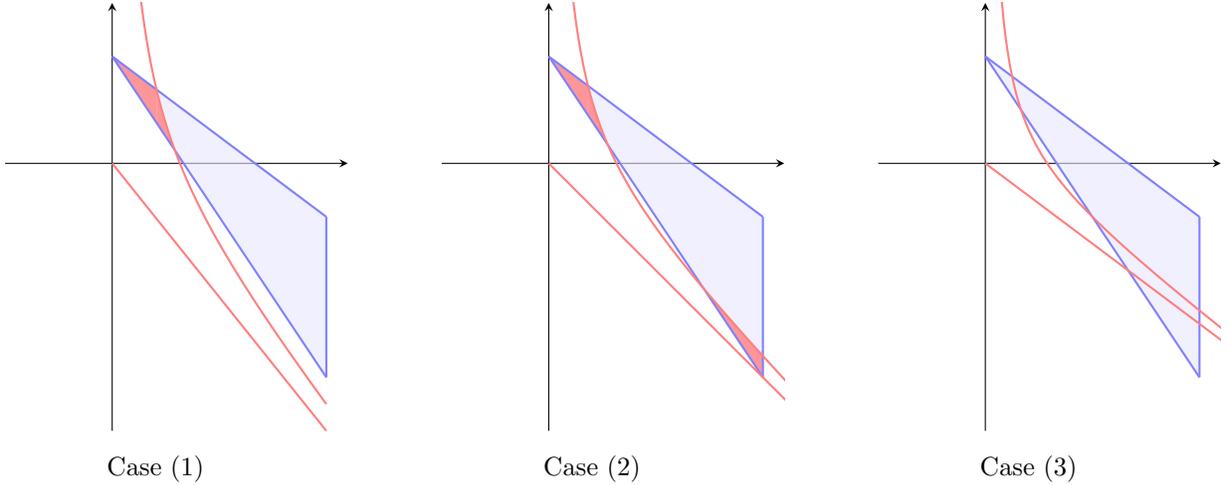

\begin{enumerate}
\item Suppose  $b=-ma$  doesn't intersect $T_X$.  This means that the slope of the line $b=-ma$ is less than the slope of the bottom edge of $T_X$. In this case we will only find contribution to the tail when the vertical of $\mathbf{v}$ is $y_0$ since otherwise we can choose large enough $t$ so that the hyperbola misses $W_\Omega(\mathbf{v})$. 

An upper bound for the contribution of $W_\Omega(\mathbf{v})$ is just the part underneath the hyperbola and inside the $T_X$. For $t$ large, the hyperbola $b= \frac{1}{at}- ma$ intersects the triangle twice. First it intersects at the top through the boundary line $b = \frac{1}{y_0}-\frac{x_0}{y_0}a$ at the point

 $$a_{top} ^+ = \frac{-1+\sqrt{1+\frac{4y_0(my_0-x_0)}{t}}}{2(m y_0-x_0)}$$

and then leaves through the bottom boundary line $b = \frac{1}{y_0}-(\frac{x_0}{y_0}+n)a$ at the point 

$$a_{bot} ^+ = \frac{-1+\sqrt{1+\frac{4y_0(my_0-(x_0+ny_0))}{t}}}{2(m y_0-(x_0+ny_0))}.$$

Thus, the contribution is given by 
$$\int_{a=0} ^{a_{top} ^+} \int_{b= \frac{1}{y_0}-(\frac{x_0}{y_0}+n)a} ^{\frac{1}{y_0}-\frac{x_0}{y_0}a}1\,db\,da+
\int_{a=a_{top} ^+} ^{a_{bot} ^+} \int_{b= \frac{1}{y_0}-(\frac{x_0}{y_0}+n)a} ^{\frac{1}{at}- ma}1\,db\,da.$$

The first integral evaluates to $\frac{n}{2}(a_{top} ^+)^2$ and, by using a Taylor series on the square root, can be shown to decay like $t^{-2}$.

The second integral evaluates to 
$$\left(\frac{1}{t}\log(a)+\frac{1}{2}\left(\frac{x_0}{y_0}+n-m\right)a^2-\frac{1}{y_0}a\right) \bigg|_{a=a^+ _{top}} ^{a^+ _{bot}}.$$
By preforming a Taylor series approximation on $a_{top} ^+$ and $a_{bot} ^+$ we get that the second integral gives an decay like $t^{-3}$.

Thus, the total decay on the integral is like $t^{-2}$.

\item Now consider the case when $b=-ma$ intersects $T_X$ at the bottom vertex of $T_X$. In this case we have $m=\frac{x_0}{y_0}+n-\frac{1}{y_0}.$ If the vertical of $y$ is the same as $y_0$, then we get a contribution to the tail at the top of $T_X$ as in case 1. In fact, this is the only way we can get contribution at the top of $T_X$.

 Now we find the contribution on the bottom of $T_X$. Thus, we are interested in the intersection of the hyperbola $b=\frac{1}{at}-ma$ with the bottom boundary line of $T_X$ given by $\frac{1}{y_0}-(\frac{x_0}{y_0}+n)a$. This is the point
$$a_{bot} ^- = \frac{-1-\sqrt{1+\frac{4y_0(my_0-(x_0+ny_0))}{t}}}{2(m y_0-(x_0+ny_0))}.$$
In fact, using that the line $b=-ma$ intersects the bottom of $T_X$, we get that $m=\frac{x_0}{y_0}+n-\frac{1}{y_0}$ and so we can see 
$$a_{bot} ^- = \frac{1}{2}\left(1+\sqrt{1-\frac{4y_0}{t}}\right).$$
 The contribution is then given by
$$\int_{a=a^- _{bot} } ^1  \int_{b= \frac{1}{y_0}-(\frac{x_0}{y_0}+n)a} ^{\frac{1}{at}- ma}1\,db\,da.$$
This integral evaluates to
$$\frac{1}{2}\left(\frac{x_0}{y_0}+n-m\right)-\frac{1}{y_0}-\frac{1}{t}\log(a^- _{bot})-\frac{1}{2}\left(\frac{x_0}{y_0}+n-m\right)(a^- _{bot})^2+\frac{a^- _{bot}}{y_0}.$$
By doing a Taylor series approximation on $a^- _{bot}$ we can show that the decay is like $t^{-2}$.

\item 
  
  Now suppose that line $b=-ma$ does intersect $T_X$ and this intersection is above the bottom vertex of $T_X$ i.e. above $b=\frac{1}{y_0}-(\frac{x_0}{y_0}+n)$. Then each point on the bottom edge of $S_\Omega(\mathbf{v})$ must be in some other winning strip $S_\Omega(\mathbf{v}')$. There are finitely many such $\mathbf{v}'$ that we will number $\mathbf{v}_1,\ldots, \mathbf{v}_n$. Thus, $W_\Omega(\mathbf{v}) \subset \left(S_\Omega(\mathbf{v}) - \bigcup_{i=1}^n S_\Omega(\mathbf{v}_i)\right)$, which is some polygonal region whose closure is completely above the bottom boundary of $S_\Omega(\mathbf{v})$, $b = -ma$. Since the hyperbola $b = \frac{1}{at} - ma$ approaches $b=-ma$ as $t \rightarrow \infty$, we see that for all $t$ large enough, the hyperbola is completely below $W_\Omega(\mathbf{v})$ and therefore $W_\Omega(\mathbf{v})$ has no contribution to the tail.

Adding up the contribution of every $\mathbf{v}\in F$, we see that there is a constant $C>0$, such that
$$\int_t ^\infty f(x)\,dx\le \frac{C}{t^2}.$$
\end{enumerate}
Now we compute a lower bound. Let $\mathbf{v}_0 = (x_0,y_0)$ be the saddle connection used to define $T_X$, $S_\Omega(\mathbf{v}_0)$ denote the associated strip, and $b = \frac{1}{at}-\frac{x_0}{y_0}a$ be the associated hyperbola. We will use this specific saddle connection to find a lower bound to $\int_t ^\infty f(x)\,dx$ essentially by using the argument from case 1 of the upper bound. That is, by analzying the behavior at the top of the triangle. Either $\mathbf{v}_0$ is the winning saddle connection for every point on $S_\Omega(\mathbf{v}_0)$ or there is some other saddle connection $\mathbf{v}$ for which it is the winning saddle connecton on $S_\Omega(\mathbf{v}_0)\cap S_\Omega(\mathbf{v})$. We deal with both cases.

\begin{enumerate}
\item If $\mathbf{v}_0$ is the winning saddle connection for every point on $S_\Omega(\mathbf{v}_0)$, then a lower bound to $\int_t ^\infty f(x)\,dx$ comes from the part underneath the hyperbola $b = \frac{1}{at}-\frac{x_0}{y_0}a$ and inside $S_\Omega(\mathbf{v}_0)$. We can choose $t$ large enough so that the hyperbola intersects $S_\Omega(\mathbf{v}_0)$ only once at the point
 $$a_{top} ^+ = \frac{-1+\sqrt{1+\frac{4y_0(my_0-x_0)}{t}}}{2(m y_0-x_0)}$$
with contribution given by
$$\int_{a=0} ^{a_{top} ^+} \int_{b= \frac{1}{y_0}-(\frac{x_0}{y_0}+n)a} ^{\frac{1}{y_0}-\frac{x_0}{y_0}a}1\,db\,da.$$

Earlier we showed this decays like $t^{-2}$.

\item In the case that there is some other saddle connection $\mathbf{v}$ for which it is the winning saddle connection on $S_\Omega(\mathbf{v}_0)\cap S_\Omega(\mathbf{v})$ we have two subcases depending on whether $\mathbf{v}$ has the same vertical as $\mathbf{v}_0$ or not. In the latter case we can choose $t$ large enough so that the contribution is the same as case 1 of the previous case. We now focus on when the vertical of $\mathbf{v}$ and $\mathbf{v}_0$ is the same. The contribution is given by
$$\int_{a=0} ^{a_{top} ^+} \int_{b=\frac{1}{y_0}-\frac{x}{y}a} ^{\frac{1}{y_0}-\frac{x_0}{y_0}a}1\,db\,da.$$
The integral evaluates to $\frac{1}{2}\left(\frac{x}{y}-\frac{x_0}{y_0}\right)(a_{top} ^+)^2$ and, by using a Taylor series on the square root, can be shown to decay like $t^{-2}$.
\end{enumerate}
\end{proof}
\section{Further Questions} 
\label{sec:questions}
We end with a few questions for further exploration. 

\begin{enumerate}
\label{sec:conclusion}
\item Are there bounds on the number of points of non-analyticity of the slope gap distribution of a Veech surface? 

 In \cite{BMMUW}, linear upper and lower bounds in terms of $n$ on the number of points was found for the translation surface given by gluing opposite sides of the $2n$-gon. These surfaces each have two cusps and have genus that grows linearly in $n$. This shows that bounds on the number of points of non-analyticity based on the number of cusps is impossible. However, we can ask if there are bounds based on the genus of the surface.



\item What can be said about the gap distributions of non-Veech surfaces?

In \cite{AC1}, it was shown that the limiting slope gap distribution exists for almost every translation surface, and in \cite{S}, the slope gap distributions for a special family of non-Veech surfaces were shown to be piecewise real-analytic. We can ask if the limiting slope gap distributions are always piecewise real-analytic, and if so, are there always finitely many points of non-analyticity.

\item Where do the points of non-analyticity lie?

Beyond just understanding the number of points of non-analyticity, we can ask about number-theoretic properties of the points themselves. In every example known to the authors of a limiting slope gap distribution, after rescaling, the points of non-analyticity lie in the trace field of the Veech group. Given that the gap distribution is computed by integrating areas between hyperbolas in regions related to the geometry of the surface, it is natural to conjecture that points of non-analyticity lie in quadratic extensions of the trace field.

\end{enumerate}



\bibliographystyle{alpha}
\bibliography{paper}

\end{document}